\newtheorem{theorem}{Theorem}[section]
\newtheorem{lemma}[theorem]{Lemma}
\providecommand{\norm}[1]{\lVert #1 \rVert}
\title{Data-driven, structure-preserving approximations to entropy-based moment closures for kinetic equations\thanks
	{This manuscript has been authored, in part, by UT-Battelle, LLC, under Contract No. DE-AC0500OR22725 with the U.S. Department of Energy. 
		The United States Government retains and the publisher, by accepting the article for publication, acknowledges that the United States Government retains a non-exclusive, paid-up, irrevocable, world-wide license to publish or reproduce the published form of this manuscript, or allow others to do so, for the United States Government purposes. The Department of Energy will provide public access to these results of federally sponsored research in accordance with the DOE Public Access Plan (\texttt{http://energy.gov/downloads/doe-public-access-plan}).}}
\date{\today}
\author{
	William A. Porteous%
\thanks{Department of Mathematics, 
	University of Texas at Austin, 
	Austin, TX 78712 USA, (\texttt{afpwilliam@gmail.com}).
}
\and
		M. Paul Laiu%
\thanks{Multiscale Methods and Dynamics Group,
				Computer Science and Mathematics Division,
				Oak Ridge National Laboratory,
				Oak Ridge, TN 37831 USA, (\texttt{laiump@ornl.gov}).
}       
\and
Cory D. Hauck%
\thanks{Multiscale Methods and Dynamics Group,
	Computer Science and Mathematics Division,
	Oak Ridge National Laboratory,
	Oak Ridge, TN 37831 USA, (\texttt{hauckc@ornl.gov}).
}
}
\begin{document}

\maketitle

\begin{abstract}
We present a data-driven approach to construct entropy-based closures for the moment system from kinetic equations.
The proposed closure learns the entropy function by fitting the map between the moments and the entropy of the moment system, and thus does not depend on the space-time discretization of the moment system and specific problem configurations such as initial and boundary conditions.
With convex and $C^2$ approximations, this data-driven closure inherits several structural properties from entropy-based closures, such as entropy dissipation, hyperbolicity, and H-Theorem. 
We construct convex approximations to the Maxwell-Boltzmann entropy using convex splines and neural networks, test them on the plane source benchmark problem for linear transport in slab geometry, and compare the results to the standard, optimization-based M$_N$ closures.
Numerical results indicate that these data-driven closures provide accurate solutions in much less computation time than the M$_N$ closures.
\end{abstract}



\section{Introduction}
\label{sec:intro}

Kinetic equations describe the movement of particles and their interactions with each other and with the surrounding environment.  They are used to model dilute particle systems in non-equilibrium regimes, with applications that includes neutral gases \cite{cercignani1988boltzmann}, plasmas \cite{hazeltine2018framework}, radiation transport \cite{lewisMiller_1993, pomraning_1973}, and charge transport in materials \cite{markowich2012semiconductor}.  The solution of a kinetic equation is a  density defined over position-velocity phase space.  Weighted integrals of this density with respect to velocity recover important physical quantities such as mass, momentum, and energy densities.

In many applications, kinetic equations are too expensive to simulate directly, and reduced models are needed.  Moment methods are a class of reduced models which approximate the evolution of a finite number of velocity averages, or moments, of the kinetic distribution.   The form of such models depends on the closure which  the information from the kinetic density that is lost in the moment approach.    The structure and behavior of such models depends heavily on the choice of closure.

Entropy-based closures  \cite{levermore_1996} approximate the unknown kinetic density by the solution of a convex minimization (using the convention of convex entropy, rather than concave)  problem that is constrained by the known moments which are input from the model.    These models inherit many of the structural features of kinetic equations, including hyperbolicity, entropy dissipation, and an H-Theorem which relates the uniquely characterizes equilibrium states as points where the entropy dissipation vanishes.

Despite their elegant structure, the practical implementation of entropy-based moment closures presents three major challenges.  First, in some degenerate cases, the entropy minimization  problem (see \eqref{eq:primal} below) may not have a solution \cite{hauck2008convex, junk2000maximum, caflisch1986equilibrium, schneider2004entropic}.  Roughly speaking a finite infimum exists, but it is never attained.   Second, even in cases that the defining optimization problem is well-posed, a complicated and expensive numerical procedure is required to solve it via the convex dual (see  \eqref{eq:dual} below) \cite{hauck2011high,alldredge2012high,alldredge2014adaptive,garrett2015optimization,abramov2007improved, abramov2010multidimensional, abramov2009multidimensional}.  Third, a necessary condition for the entropy optimization problem to have a solution is that the moments in the constraint set be realizable; that is, they are moments of a non-negative kinetic distribution.  However, numerical methods which preserve convex invariant domains (such as the realizable set) can be challenging to construct.   Limiters based on the strategy proposed in  \cite{zhangShu_2010}  have been used in \cite{olbrant2012realizability, chu2019realizability,alldredge2015realizability}.  Another viable limiting strategy can be found in \cite{guermond2019invariant}.  

An alternative to limiting in a numerical method is to regularize the optimization problem.  Such a  strategy was introduced in \cite{alldredge2019regularized}.  The main idea is to replace the moment constraints in the optimization problem by a penalization term.  The resulting moment system maintains hyperbolicity and entropy dissipation properties of the original entropy-based closure, but extends the realizable set to the entire space. Recently, a rigorous convergence analysis has been initiated \cite{alldredge2021convergence} to understand the errors made in the regularization.  

The relaxed minimization problem that defines the regularized closure is still numerically expensive to solve.  To address this problem, we take advantage of the entropy structure to construct a data-driven approximation to the entropy-based closure. The development of data-driven closures has been an active area of research lately.  Constructive models include the classical work in \cite{kershaw1976flux}, as well as various approximations of entropy-based closures \cite{sarr2020second,pichard2017approximation,mcdonald2013affordable}.  Learned models (i.e., those which rely on optimization of a loss function) can be found in \cite{bois2020neural,huang2021machine,han2019uniformly,huang2021machine2,sadr2020gaussian}.   Of these learned models the approach presented here is most closely related to the work in \cite{huang2021machine2,sadr2020gaussian}.  The approach in \cite{sadr2020gaussian}, which is applied to  the Boltzmann equation of gas-dynamics, accepts the entropy-based closure as sufficient and then seeks to approximate the map between the moments and the solution of the dual problem.  Thus the method does not require training from a full kinetic simulation, but instead relies only on the optimization problem.   The consequences of this difference are not entirely clear:  on one hand approximating the entropy-based closure is likely more robust, reproducible, flexible, and computationally efficient.   However, training with simulation data may provide important prior information that leads to higher fidelity simulations, if that information can be transferred to similar problem setups.  The method in \cite{huang2021machine2} uses kinetic simulation data to train the model, and is therefore not an approximation of the entropy-based closure.  However, it does enforce hyperbolicity, at least for linear kinetic models in slab geometries, like those studied in the numerical results section of this paper.   

In the current paper, we propose a model that learns the map between the moments and the entropy of the moment system, rather than the moments and the dual variables.  Even though the dual variables are the derivative of the entropy with respect to the moments, approximating them directly does not enforce the convex structure required to ensure entropy dissipation and hyperbolicity in the moment system.  However, arguments established in  \cite{alldredge2019regularized} show that a convex approximation to the entropy will preserve such properties.  With this fact in mind, we simulate the entropy-based systems M$_1$ and M$_2$ models which are used to approximate radiative transfer.  In our numerical results, we consider a simple linear kinetic in slab geometry, like the one use in \cite{huang2021machine2}.  However, the approach is generalizable to arbitrary moment order and dimension and to nonlinear problems.  The only requirement is that the entropy-based closure is well-posed and the approximation of the entropy is convex.

We construct convex approximations of the entropy using convex splines and neural networks.  In both cases, we rely on a normalization of the moment space in order to reduce the dimensions of the problem and make the domain bounded.  We show that for the Maxwell-Boltzmann entropy, this normalization preserves convexity and $C^2$ regularity of the entropy. The spline approximation is fast and provably convex everywhere in the domain.  However,  they are used only in the M$_1$ case, which has two moment but reduces to a one-dimensional fitting  problem after the normalization.  For M$_2$, the domain for the fitting problem is two-dimensional.  Although convex splines exist is this setting, we are not aware of a constructive approach that yields a $C^2$ function.    For the neural network approximation, convexity can be enforced at training points and checked at test points.  For the problems considered here, we have found that the neural network fit is convex on a densely sampled set of test points.  However, this approach is not viable in higher dimensions and networks which guarantee convexity \cite{amos2017input} will be necessary.

The rest of paper is organized as follows.  In Section \ref{sec:prelim}, we briefly introduce kinetic equations, entropy-based moments methods, and approximate entropy closures.  In Section \ref{sec:data_closures}, we describe the data-driven closures, using the Maxwell-Boltzmann entropy and applied to a simple linear kinetic equation.  In Section \ref{sec:num_results}, we present implementations details and numerical results.  Summary and conclusions are given in Section~\ref{sec:conclusion}.
\section{Preliminaries }
\label{sec:prelim}

\subsection{Kinetic equation}
\label{subsec:kinetic}

For neutral particle systems, the governing kinetic equation takes the form
\begin{equation}\label{eq:kinetic_eqn}
\p_t f(t,x,v) + v\cdot \nabla_x f = \cC(f(t,x,\cdot))(v)\:,
\end{equation}
where the evolving \textit{kinetic density function} $f\colon [0,\infty)\times X \times V \to \bbR_+$ describes the particle density at time $t\in\bbR_+:=[0,\infty)$ at position $x\in X\subseteq\bbR^d$ traveling at velocity $v\in V \subseteq\bbR^d$. 
The operator $v\cdot\nabla_x(\cdot) := \sum_{i=1}^d v^{[i]} \p_{x^{[i]}} (\cdot)$ models the advection of particles, where $v^{[i]}$ and $x^{[i]}$ respectively denotes the velocity and position component in dimension $i$.
The collision operator $\cC$ is an integral operator in $v$ at each $(t,x)$, which introduces the interactions between particles as well as the interaction of particle and the background medium.
In order to be well-posed, \eqref{eq:kinetic_eqn} must be equipped with appropriate initial and boundary conditions.   Solutions to \eqref{eq:kinetic_eqn} are often restricted to values in a set $B\subseteq \bbR_+$, which reflects physical bounds on $f$ such as positivity.

The collision operator is endowed with certain structural features \cite{levermore_1996} (see also \cite{alldredge2019regularized}):
\begin{enumerate}[label=(\roman*)]
	\itembf{Collision invariants}:  There exist function $e \colon V\to\bbR$ such that $\vint{ e \cC(g)} =0$ for all $g \in \operatorname{Dom}(\cC)$, where $\vint{\cdot}:=\int_V\cdot\, dv$.  As a result
	\begin{equation}
	\p_t \vint{e f} + \nabla_x \cdot \vint{v e f} = 0\:,
	\end{equation}
	where $ \nabla_x \cdot \vint{g}:=\sum_{i=1}^d \p_{x^{[i]}} \vint{g^{[i]}}$.  We denote the set of collision invariants associated to $\cC$ as $\bbE$.
	\itembf{Entropy dissipation}: For an open set $D\subseteq \bbR$, there exists a twice differentiable, strictly convex function $\eta\colon D\to\bbR$, referred to as the kinetic entropy density, such that 
	\begin{equation}\label{eq:entropy_disspation_condition}
	\vint{\eta^\prime(g) \cC(g)} \leq 0 \quad \text{for all $g$ in Dom($\cC$) such that Range($g$)$\subseteq D$}\:.
	\end{equation}
	Integrating \eqref{eq:kinetic_eqn} in $v$ against $\eta^\prime(f)$ then leads to the entropy dissipation law
	\begin{equation}
	\p_t \vint{\eta(f)} + \nabla_x \cdot \vint{v\eta(f)} = \vint{\eta^\prime(f) \cC(f)} \leq 0\:.
	\end{equation}
	\itembf{H-theorem}: For any $g \in \operatorname{Dom}(\cC)$ such that Range($g$)$\subseteq D$, the following statements are equivalent:
	\begin{equation}
	\label{eq:h_thm}
	\text{(a)}\,\,\vint{\eta^\prime(g)\cC(g)} = 0\:;\quad
	\text{(b)}\,\,\cC(g) = 0\:;\quad
	\text{(c)}\,\,\eta^\prime(g)\in\bbE\:.
	\end{equation}
\end{enumerate}

\subsection{Entropy-based moment methods}
\label{subsec:closures}

Moment methods approximate the evolution of a finite set of integral averages, i.e., moments, of $f$ given by 
\begin{equation}
\bu_f(t,x) = \vint{\bfm(\cdot) f(t,x,\cdot)},
\end{equation}
where the vector-valued function $\bfm(v) = [m_0(v),\dots,m_n(v)]^T$ collects a set of test functions in $v$.  In most cases, these test functions form the basis of the space $\bbP_N(V)$ consisting of polynomials on $V$ up to a prescribed degree $N$.  The moments $\bu_f$ satisfy the equation
\begin{equation}\label{eq:moment_eqn_exact}
\p_t \bu_f + \nabla_x \cdot \vint{v \bfm f} = \vint{\bfm\cC(f)},
\end{equation}
which is not closed.  A kinetic closure is one in which an approximation for $f$ is used to close the system.   Let $\bbF_{\bfm} = \{g \in \text{Dom}(\cC) \colon \text{Range}(g)\subseteq D ~\text{and}~ |\vint{\bfm g}| < \infty \}$ and let $\cR_\bfm = \{\bw: \bw = \vint{\bfm g} , g \in \bbF_{\bfm} \}$.  A kinetic closure is prescribed via an ansatz $F \colon \cR_\bfm \to \bbF_\bfm$ where $F_\bw$ is understood approximation of a kinetic distribution $g$ with moments $\bw \in \cR_\bfm$., i.e., $F_{\vint{\bfm g(x,t,\cdot})}(v) \approx g(x,v,t)$. The resulting (closed) moment system for $\bu(x,t) \approx \bu_f(x,t)$ take the form
\begin{equation}\label{eq:moment_eqn_F}
\p_t \bu+ \nabla_x \cdot \vint{v \bfm F_{\bu}} = \vint{\bfm\cC(F_{\bu})}.
\end{equation}
An entropy-based closure \cite{levermore_1996,dubroca1999etude} is one which specifies $F_\bw$ as the solution to the optimization problem
\begin{equation}\label{eq:primal}
\minimize_{g\in \bbF_\bfm} \cH(g):=\vint{\eta(g)} \qquad
\text{subject to}\quad \vint{\bfm g} = \bw\:.
\end{equation}
When a solution for \eqref{eq:primal} exists, it is unique and takes the form \cite{hauck2008convex, junk2000maximum} $G_{\hat\bsalpha(\bw)}$ where
\begin{equation}
\label{eq:G}
G_{\bsalpha}:= [\eta^*]^{\prime}(\bsalpha\cdot \bfm), \quad \bsalpha \in \bbR^{n+1} ;
\end{equation}
$\eta^*$ is the Legendre dual of $\eta$; and $\hat{\bsalpha}\colon\bbR^n\to\bbR^n$ is the map from $\bw$ to the solution of the dual problem to \eqref{eq:primal}, i.e., 
\begin{equation}\label{eq:dual}
\hat{\bsalpha}(\bw)=\argmax_{\bsalpha\in\bbR^n}\{\bsalpha\cdot\bw-\vint{\eta^*(\bsalpha\cdot\bfm)}\}\:.
\end{equation}
It follows from the equality constraints in \eqref{eq:primal} (or equivalently from the first-order optimality condition for \eqref{eq:dual}) that 
$
\vint{\bfm G_{\hat{\bsalpha}{(\bw)}}}= \bw\:,
$
i.e., the ansatz preserves moment vectors.
Thus the inverse of $\hat{\bsalpha}$, denoted as $\hat{\bw}\colon\bbR^n\to\bbR^n$, is given by
\begin{equation}\label{eq:moment_consistency}
\hat{\bw}(\bsalpha):=\vint{\bfm G_{\bsalpha}}\:.
\end{equation}
With this property, \eqref{eq:moment_eqn_F} becomes the moment equation
\begin{equation}\label{eq:moment_eqn_u}
\p_t \bu + \nabla_x \cdot\vint{v \bfm G_{\hat\bsalpha(\bu)}} = \vint{\bfm\cC(G_{\hat\bsalpha(\bu)})}\:.
\end{equation}
The system \eqref{eq:moment_eqn_u} is a symmetric hyperbolic balance law when expressed in terms of the dual variable $\bsbeta(x,t) = \hat \bsalpha(\bu(x,t))$ and that solutions to \eqref{eq:moment_eqn_u} dissipate the strictly convex entropy 
\begin{equation}\label{eq:entropy}
h(\bu):= \cH(G_{\hat\bsalpha(\bu)}),
\end{equation}
These results are easy to show once the known relationship $h'(\bw) = \hat\bsalpha(\bw)$ is established; see \cite{levermore_1996} for details.

\subsection{Approximate entropy-based closures}
\label{subsec:regularized_closures}

Let $h_{\textup{a}}$ be a strictly convex $C^2$ approximation of $h$ and let $h^*_{\textup{a}}$ be the Legendre dual of $h$, so that $[h^*_{\textup{a}}]' = [h'_{\textup{a}}]^{-1} $.  
For any $\bw \in \cR_\bfm$, define $\hat{\bsalpha}_{\textup{a}} (\bw)= h_{\textup{a}}'(\bw)$.  The approximate entropy-based closure for $\bu_{\textup{a}}$ is 
\begin{equation}\label{eq:reg_moment_eqn_u}
\p_t \bu_{\textup{a}} + \nabla_x \cdot\vint{v \bfm G_{\hat\bsalpha_{\textup{a}}(\bu_{\textup{a}} )}} = \vint{\bfm\cC(G_{\hat\bsalpha(\bu_{\textup{a}} )})}\:.
\end{equation}
The following result comes directly from \cite{alldredge2019regularized}, where $h_{\textup{a}}$ is defined to be $h_\gamma$, which is parameterized by a regularization parameter $\gamma > 0$ such that $\lim_{\gamma \to 0^+} h_\gamma = h$. (See \cite{alldredge2019regularized} for the explicit definition of $h_\gamma$.) Since it forms the basis of the data-driven closure strategy, we present it again here, along with a complete proof.

\begin{theorem}[{\cite{alldredge2019regularized}}]
	The moment system \eqref{eq:reg_moment_eqn_u} is symmetric hyperbolic in the variable $\bsbeta(t,x) := \hat \bsalpha_{\textup{a}}(\bu_{\textup{a}}(t,x) )$.  The function $h_{\textup{a}}$ acts as an entropy of \eqref{eq:reg_moment_eqn_u};  that is, $h_{{\textup{a}}}(\bu_{{\textup{a}}})$ is dissipated by solutions of \eqref{eq:reg_moment_eqn_u}. Moreover, the following statements are equivalent:
	\begin{equation}\label{eq:moment_H-thm}
	{\rm(a)}\,\, 
	\bsbeta \cdot \vint{\bfm\cC( G_{\bsbeta } ) } = 0
	\:;\quad
	{\rm(b)}\,\, 
	\vint{\bfm\cC(G_{\bsbeta }  )} = 0
	\:;\quad
	{\rm(c)}\,\, \bsbeta \cdot \bfm \in\bbE\:.
	\end{equation}
\end{theorem}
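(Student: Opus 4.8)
The plan is to prove the three claims in sequence, reducing each to the kinetic-level structure through the single identity $\eta'(G_{\bsbeta}) = \bsbeta\cdot\bfm$, which follows at once from $G_{\bsbeta} = [\eta^*]'(\bsbeta\cdot\bfm)$ together with $[\eta^*]' = (\eta')^{-1}$. For the symmetric-hyperbolicity claim, I would change the unknown from $\bu_{\textup{a}}$ to $\bsbeta = \hat\bsalpha_{\textup{a}}(\bu_{\textup{a}}) = h_{\textup{a}}'(\bu_{\textup{a}})$. Since $h_{\textup{a}}$ is strictly convex and $C^2$, the map $h_{\textup{a}}'$ is invertible with $\bu_{\textup{a}} = [h_{\textup{a}}^*]'(\bsbeta)$, so $\p\bu_{\textup{a}}/\p\bsbeta = [h_{\textup{a}}^*]''(\bsbeta)$ is symmetric positive definite. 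Differentiating $G_{\bsbeta} = [\eta^*]'(\bsbeta\cdot\bfm)$ shows that the flux Jacobian in direction $i$ is $\vint{v^{[i]}\,[\eta^*]''(\bsbeta\cdot\bfm)\,\bfm\bfm^{T}}$, which is manifestly symmetric. Hence, written in $\bsbeta$, the system carries an SPD coefficient on $\p_t\bsbeta$ and symmetric coefficients on each $\p_{x^{[i]}}\bsbeta$, which is exactly the Friedrichs form.

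For the entropy-dissipation claim, I would take the dot product of \eqref{eq:reg_moment_eqn_u} with $\bsbeta = h_{\textup{a}}'(\bu_{\textup{a}})$. The time term collapses to $\p_t h_{\textup{a}}(\bu_{\textup{a}})$. For the flux term I would exhibit an entropy flux with components $q^{[i]}(\bsbeta) := \vint{v^{[i]}\big((\bsbeta\cdot\bfm)[\eta^*]'(\bsbeta\cdot\bfm) - \eta^*(\bsbeta\cdot\bfm)\big)}$; since $\frac{d}{d\zeta}\big(\zeta[\eta^*]'(\zeta) - \eta^*(\zeta)\big) = \zeta[\eta^*]''(\zeta)$, a direct differentiation gives $\bsbeta\cdot\nabla_x\cdot\vint{v\bfm G_{\bsbeta}} = \sum_i \p_{x^{[i]}} q^{[i]}$. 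What remains on the right is the source $\vint{(\bsbeta\cdot\bfm)\cC(G_{\bsbeta})} = \vint{\eta'(G_{\bsbeta})\cC(G_{\bsbeta})} \le 0$, where the last inequality is the kinetic entropy-dissipation condition \eqref{eq:entropy_disspation_condition} applied to $G_{\bsbeta}$. This yields the local dissipation law and hence that $h_{\textup{a}}(\bu_{\textup{a}})$ is dissipated.

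For the H-theorem, I would apply the kinetic equivalences \eqref{eq:h_thm} to $g = G_{\bsbeta}$. Statement (a) there reads $\vint{\eta'(G_{\bsbeta})\cC(G_{\bsbeta})} = 0$, which is exactly \eqref{eq:moment_H-thm}(a) after substituting $\eta'(G_{\bsbeta}) = \bsbeta\cdot\bfm$; statement (c) there, $\eta'(G_{\bsbeta})\in\bbE$, is exactly \eqref{eq:moment_H-thm}(c). To close the loop on \eqref{eq:moment_H-thm}(b), I would note that $\cC(G_{\bsbeta}) = 0$ trivially forces $\vint{\bfm\cC(G_{\bsbeta})} = 0$, while dotting $\vint{\bfm\cC(G_{\bsbeta})} = 0$ with $\bsbeta$ returns \eqref{eq:moment_H-thm}(a); combined with the kinetic equivalence between $\vint{\eta'(G_{\bsbeta})\cC(G_{\bsbeta})} = 0$ and $\cC(G_{\bsbeta}) = 0$, all three macroscopic statements are equivalent.

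The main obstacle is conceptual rather than computational, and it is the point I would be most careful to flag: unlike the exact closure, the approximate closure does \emph{not} satisfy moment consistency, i.e. in general $\bu_{\textup{a}} \neq \vint{\bfm G_{\bsbeta}}$, because $\bu_{\textup{a}} = [h_{\textup{a}}^*]'(\bsbeta)$ is governed by the approximate dual $h_{\textup{a}}^*$ while the flux and collision terms see the true kinetic ansatz $G_{\bsbeta}$. The argument must therefore keep the two structures separate: the positive-definite coefficient on $\p_t\bsbeta$ comes from $h_{\textup{a}}^*$, whereas flux symmetry, the entropy-flux potential $q^{[i]}$, and the sign of the dissipation all come from the kinetic dual $\eta^*$ via $G_{\bsbeta} = [\eta^*]'(\bsbeta\cdot\bfm)$. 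Verifying that symmetrization and dissipation survive this decoupling is the only real subtlety, and the identity $\eta'(G_{\bsbeta}) = \bsbeta\cdot\bfm$ is the single bridge that reconnects the macroscopic entropy $h_{\textup{a}}$ to the kinetic structure.
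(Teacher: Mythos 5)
Your proposal is correct and follows essentially the same route as the paper's proof: symmetrization via the change of variables $\bsbeta = h_{\textup{a}}'(\bu_{\textup{a}})$ with the SPD coefficient $[h_{\textup{a}}^*]''$ and the symmetric kinetic flux Jacobian, entropy dissipation by dotting with $\bsbeta$ using the identical entropy flux $\vint{v\bigl((\bsbeta\cdot\bfm)G_{\bsbeta}-\eta^*(\bsbeta\cdot\bfm)\bigr)}$ and the kinetic dissipation inequality through $\eta'(G_{\bsbeta})=\bsbeta\cdot\bfm$, and the H-theorem inherited from \eqref{eq:h_thm} with $g=G_{\bsbeta}$. Your explicit chain of implications for \eqref{eq:moment_H-thm}(b) and your remark that moment consistency $\bu_{\textup{a}}=\vint{\bfm G_{\bsbeta}}$ fails for the approximate closure are details the paper leaves implicit, but they do not change the argument.
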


\begin{proof}
	By definition, $\bsbeta = h'_{\textup{a}} (\bu_{\textup{a}})$, which implies that $\bu_{\textup{a}}= [h^*_{\textup{a}}]'(\bsbeta)$.  Thus in terms of $\bsbeta$,  \eqref{eq:reg_moment_eqn_u} becomes, 
\begin{equation}\label{eq:reg_moment_eqn_beta}
\p_t [h^*_{\textup{a}}]'(\bsbeta) + \nabla_x \cdot\vint{v \bfm G_{\bsbeta} }= \vint{\bfm \cC(G_{\bsbeta} )}\:.
\end{equation}
The chain rules, along with the definition of $G$ in \eqref{eq:G}, then gives
\begin{equation}
\p_t [h^*_{\textup{a}}]'(\bsbeta)  = [h^*_{\textup{a}}]''(\bsbeta) \p_t \bsbeta
\quand
\nabla_x \cdot \Vint{v \bfm G_{\bsbeta} } = \Vint{v \bfm \, [\eta^*]'(\bsbeta\cdot \bfm)} \cdot \nabla_x \bsbeta
\end{equation}
The symmetric hyperbolic form then follows since $[h^*_{\textup{a}}]''(\bsbeta)$ is symmetric and positive definite and  $\Vint{v \bfm \bfm^T\, [\eta^*]'(\bsbeta\cdot \bfm)}$ is symmetric.   

To show entropy dissipation, we multiply \eqref{eq:reg_moment_eqn_u} by $\bsbeta$:
\begin{equation}
\label{eq:moment_entropy_diss}
\p_t h_{\textup{a}}(\bu_{\textup{a}}) + \bsbeta \cdot \nabla_x \cdot\vint{v \bfm G_{\bsbeta}} = \vint{(\bsbeta \cdot \bfm)\cC(G_{\bsbeta})}
\end{equation}
and observe that, upon integration by parts and the reverse chain rule,
\begin{equation}
\begin{aligned}
\bsbeta \cdot \nabla_x \cdot\vint{v \bfm G_{\bsbeta}} 
&= \nabla_x \cdot\vint{v (\bsbeta \cdot \bfm) G_{\bsbeta}} - \vint{v \bfm G_{\bsbeta}} \cdot \nabla_x \bsbeta \\
&= \nabla_x \cdot\vint{v (\bsbeta \cdot \bfm) G_{\bsbeta}} - \nabla_x  \cdot \vint{v \eta^*(\bsbeta \cdot \bfm) } 
\end{aligned}
\end{equation}
Hence there is a locally conservative entropy flux for smooth solutions.  Meanwhile, setting $[\eta^*]'(g):= \bsbeta \cdot \bfm$, we recognize
the source term in \eqref{eq:moment_entropy_diss} as
\begin{equation}
 \vint{(\bsbeta \cdot \bfm)\cC(G_{\bsbeta})} = [\eta^*]'(g)\cC(g) \leq 0
\end{equation}
This shows that the source terms dissipates the entropy.  Moreover, the conditions in \eqref{eq:moment_H-thm} are inherited directly from the kinetic structure formulated in \eqref{eq:h_thm}.
\end{proof}

\section{Data-driven approximations for the Maxwell-Boltzmann entropy}
\label{sec:data_closures}

In this paper, we focus on the construction of a data-driven approximate entropy to the Maxwell-Boltzmann entropy.
The Maxwell-Boltzmann entropy is commonly used to close the moment system due to its physical relevance. 
However, its application is often limited by the high computational cost incurred in the optimization procedure for calculating the ansatz.
The Maxwell-Boltzmann kinetic entropy density is given by 
\begin{equation}\label{eq:MB_entropy}
\eta_{\MB}(z):=z\log z - z\:,\quad\text{with}\quad
\eta_{\MB}^*(y) = [\eta_{\MB}^*]^\prime(y) = e^y\:.
\end{equation} 
The domain of $\eta_{\MB}$ is $D=\bbR_+$, thus the Maxwell-Boltzmann entropy leads to a nonnegative ansatz.
Specifically, plugging $\eta = \eta_{\MB}$ into \eqref{eq:primal}--\eqref{eq:dual} gives the ansatz $G_{\hat{\bsalpha}_{\MB}(\bw)}= e^{\hat{\bsalpha}_{\MB}(\bw)\cdot\bfm}$, where 
\begin{equation}\label{eq:MB_optimization}
\hat{\bsalpha}_{\MB}(\bw)= \argmax_{\bsalpha\in\bbR^n}\{\bsalpha\cdot\bw-\vint{e^{\bsalpha\cdot\bfm}}\}\:.
\end{equation}
The Maxwell-Boltzmann entropy $h_{\MB}$ is defined as $h_{\MB}:=\vint{\eta_{\MB}^*(G_{\hat{\bsalpha}_{\MB}(\bw)})}$, and the associated realizable set 
\begin{equation}\label{eq:MB_realizable_set}
\cR_{\bfm,\MB}:=\{\bw\in\bbR^n \colon \bw=\vint{\bfm g},\, g\geq0,\, \vint{|\bfm g|} <\infty  \}\:.
\end{equation}
It is known \cite{alldredge2012high,LHMOT-2016,GH12} that the optimization procedure for evaluating $\hat{\bsalpha}_{\MB}(\bw)$ at given moment vector $\bw$ is usually the most computationally intensive part in solving the moment system with Maxwell-Boltzmann moment closure.
We aim to construct a twice differentiable, convex approximate entropy $\hA\colon\bbR^n\to\bbR$ to the Maxwell-Boltzmann entropy $h_{\MB}$, and approximate the optimal multiplier $\hat{\bsalpha}_{\MB}$ by 
\begin{equation}
\halphaA(\bw):=\hA^\prime(\bw)\:.
\end{equation}
With a twice differentiable and convex $\hA$, the closed moment system inherits important features of the kinetic equation, including hyperbolicity, collisional invariants, entropy dissipation, and H-theorem , as shown in Section~\ref{subsec:regularized_closures}.
Further, since the proposed closure is based on an approximate entropy $\hA$, it can be constructed using data from the optimization problem \eqref{eq:MB_optimization}, which does not require full kinetic simulations. Thus, the resulting closure is independent of the space-time discretization of moment models, initial and boundary conditions, and collision operators. The closure only depends on the moment order $N$. 

We propose to construct $\hA$ based on a given dataset $\{(\bw^{(i)},h_{\MB}(\bw^{(i)}),\hat{\bsalpha}_{\MB}(\bw^{(i)}))\}_{i\in \cI}$, where $\{h_{\MB}(\bw^{(i)})\}_{i\in \cI}$ and $\{\hat{\bsalpha}_{\MB}(\bw^{(i)})\}_{i\in \cI}$ are respectively the function values and the gradients of the Maxwell-Boltzmann entropy, $h_{\MB}$, evaluated at the moments $\{\bw^{(i)}\}_{i\in \cI}$ sampled from the domain of $h_{\MB}$, i.e., the realizable set $\cR_{\bfm,\MB}$.
For the remainder of the paper, we focus on Maxwell-Boltzmann entropy and will drop the $\MB$ subscript for simplicity.

\subsection{Dimension reduction via normalization}
\label{subsec:normalization}

One key factor in the construction of data-driven closures is a good sampling strategy of the dataset $\{(\bw^{(i)},h(\bw^{(i)}),\hat{\bsalpha}(\bw^{(i)}))\}_{i\in \cI}$.
Due to the nonlinear nature of the entropy function, the quality of data-driven approximations depends on whether the sampled dataset provides a reasonable coverage of the realizable set $\cR_{\bfm}$. 
In the context of Maxwell-Boltzmann entropy, the realizable set $\cR_{\bfm}$ is unbounded, e.g., $\bw=[w_0,0,\dots,0]^T$ is in $\cR_{\bfm}$ for any $w_0>0$. 
The unboundedness of $\cR_{\bfm}$ makes sampling a representative dataset difficult.
In this section, we show that this issue can be addressed by (i) sampling data on a ``normalized" bounded realizable set $\tilde{\cR}_{\bfm}\subset\bbR^{n-1}$, (ii) constructing a convex approximation of the Maxwell-Boltzmann entropy on $\tilde{\cR}_{\bfm}$, and (iii) extending the constructed approximation to the full realizable set $\cR_{\bfm}\subset\bbR^n$.

From here on, we introduce the notation $w_0$ for the zero-th order moment and $\tilde{\bw}$ for the remainder moment vector, i.e., $\bw=[w_0, \tilde{\bw}^T]^T$. With this notation, we consider the normalized realizable set to be $\tilde{\cR}_{\bfm}:=\{\tilde{\bsomega}\in\bbR^{n-1}\colon \bsomega=[1,\tilde{\bsomega}^T]^T\in\cR_{\bfm} \}$, where each vector in $\tilde{\cR}_{\bfm}$ is a realizable moment vector with the zero-th moment normalized to one. 
The set $\tilde{\cR}_{\bfm}$ is known to be bounded for reasonable choices of $\bfm$, e.g. polynomial basis and spherical harmonic basis, which makes it much easier to sample a representative dataset on $\tilde{\cR}_{\bfm}$.
To this point, we aim to approximate $\tilde{h}$, the restriction of Maxwell-Boltzmann entropy $h$ on $\tilde{\cR}_{\bfm}$, by a data-driven convex approximation $\thA\in C^2(\tilde{\cR}_{\bfm})$ with data collected on $\tilde{\cR}_{\bfm}$.
The function $\thA$ is then extended from $\tilde{\cR}_{\bfm}$ to the full realizable set $\cR_{\bfm}$ to approximate $h$.

We first show in the following lemma that, for each $\bw\in\cR_{\bfm}$, the Maxwell-Boltzmann entropy $h(\bw)$ can be calculated from $w_0$ and $\tilde{h}(\tilde{\bw}/w_0)$, where $\tilde{h}$ is the restriction of $h$ on $\tilde{\cR}_{\bfm}$. 
\begin{lemma}\label{lem:extension}
	Let $h$ be the Maxwell-Boltzmann entropy, $\tilde{h}$ be the restriction of $h$ on $\tilde{\cR}_{\bfm}$, i.e., 
	\begin{equation}\label{eq:restriction}
	\tilde{h}(\tilde{\bsomega}):= h([1,\tilde{\bsomega}^T]^T) \quad\text{for all } \tilde{\bsomega}\in\tilde{\cR}_{\bfm},
	\end{equation}
	and $\tilde{\bsalpha}$ be the gradient of $\tilde{h}$.
	Then, for any moment vector $\bw=[w_0,\tilde{\bw}^T]^T\in\cR_{\bfm}$, it follows that
	\begin{equation}\label{eq:extension_exact}
	h(\bw) = w_0  \,\tilde{h}(\tilde{\bw}/w_0) + w_0\log w_0\:,
	\end{equation}	
	and
	\begin{equation}
	\label{eq:alpha_scaling_exact}
	\hat{\bsalpha}(\bw):=h^\prime(\bw) = \left[\begin{array}{c}
	\tilde{h}(\tilde{\bw}/w_0) + \frac{1}{w_0} \tilde{\bw}^T\tilde{\bsalpha}(\tilde{\bw}/w_0) + \log w_0 + 1\\
	\tilde{\bsalpha}(\tilde{\bw}/w_0)
	\end{array}\right]\:.
	\end{equation}
\end{lemma}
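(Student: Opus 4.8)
The plan is to reduce both identities to a single scaling (quasi-homogeneity) property of the Maxwell--Boltzmann closure and then differentiate. The structural fact that makes everything work is that the zeroth basis function is constant, $m_0\equiv 1$, so that the exponential ansatz factors: writing $\bsalpha=[\alpha_0,\tilde{\bsalpha}^T]^T$ we have $G_{\bsalpha}=e^{\bsalpha\cdot\bfm}=e^{\alpha_0}\,e^{\tilde{\bsalpha}\cdot\tilde{\bfm}}$. I would first record the covariance of the dual variable: for any $c>0$ and $\bw\in\cR_{\bfm}$,
\begin{equation*}
\hat{\bsalpha}(c\,\bw)=\hat{\bsalpha}(\bw)+[\log c,0,\dots,0]^T.
\end{equation*}
Indeed, $G_{\hat{\bsalpha}(\bw)+[\log c,0,\dots,0]^T}=c\,G_{\hat{\bsalpha}(\bw)}$ by the factorization, so $\vint{\bfm\,c\,G_{\hat{\bsalpha}(\bw)}}=c\,\bw$ by the moment-consistency identity \eqref{eq:moment_consistency}; since the maximizer in \eqref{eq:MB_optimization} is unique (strict concavity of the dual objective), the shifted multiplier must equal $\hat{\bsalpha}(c\bw)$. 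One also notes $c\bw\in\cR_{\bfm}$, which is immediate since $cg\ge 0$ whenever $g\ge 0$.

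Next I would convert this into a scaling law for the entropy itself. Using \eqref{eq:entropy}, \eqref{eq:G}, and $\eta(e^y)=(y-1)e^y$, together with \eqref{eq:moment_consistency} and $m_0\equiv1$, gives the closed-form value
\begin{equation*}
h(\bw)=\vint{\eta(G_{\hat{\bsalpha}(\bw)})}=\hat{\bsalpha}(\bw)\cdot\bw-\vint{e^{\hat{\bsalpha}(\bw)\cdot\bfm}}=\hat{\bsalpha}(\bw)\cdot\bw-w_0.
\end{equation*}
Substituting the covariance of $\hat{\bsalpha}$ and simplifying then yields the quasi-homogeneity relation $h(c\,\bw)=c\,h(\bw)+c\,w_0\log c$. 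Finally I specialize: taking $\bw=[1,(\tilde{\bw}/w_0)^T]^T$ (whose zeroth moment is $1$ and whose value is $\tilde{h}(\tilde{\bw}/w_0)$ by \eqref{eq:restriction}) and $c=w_0$ maps this vector to $[w_0,\tilde{\bw}^T]^T$ and produces \eqref{eq:extension_exact} in one line.

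With \eqref{eq:extension_exact} established, \eqref{eq:alpha_scaling_exact} follows by differentiation, since $\hat{\bsalpha}(\bw)=h'(\bw)$ and $\tilde{\bsalpha}=\nabla\tilde{h}$. The derivative in $\tilde{\bw}$ is routine: the chain rule through the argument $\tilde{\bw}/w_0$ reproduces $\tilde{\bsalpha}(\tilde{\bw}/w_0)$ for the lower block. The zeroth component $\partial_{w_0}h$ is where essentially all the care is needed, and I expect it to be the main obstacle: differentiating $w_0\,\tilde{h}(\tilde{\bw}/w_0)$ gives $\tilde{h}(\tilde{\bw}/w_0)$ plus a cross term arising from $\partial_{w_0}(\tilde{\bw}/w_0)=-\tilde{\bw}/w_0^2$, while $\partial_{w_0}(w_0\log w_0)=\log w_0+1$. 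Getting the coefficient \emph{and sign} of that cross term right is the delicate point, so I would cross-check it against an independent route: read the zeroth multiplier off the value formula at the normalized vector, obtaining $\hat{\alpha}_0([1,\tilde{\bsomega}^T]^T)=\tilde{h}(\tilde{\bsomega})-\tilde{\bsomega}^T\tilde{\bsalpha}(\tilde{\bsomega})+1$, and then applying the covariance shift $\hat{\alpha}_0(\bw)=\hat{\alpha}_0([1,\tilde{\bsomega}^T]^T)+\log w_0$ with $\tilde{\bsomega}=\tilde{\bw}/w_0$. Agreement of the two derivations pins down the cross term unambiguously and completes the proof.
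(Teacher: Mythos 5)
Your proposal is correct and, at its core, follows the same route as the paper's proof: the paper likewise establishes the multiplier shift $\hat{\bsalpha}(\bw)=\hat{\bsalpha}(\bw/w_0)+[\log w_0,0,\dots,0]^T$ (via the moment-consistency relation \eqref{eq:moment_consistency} and uniqueness of the dual maximizer), combines it with the dual value formula $h(\bw)=\hat{\bsalpha}(\bw)\cdot\bw-\vint{e^{\hat{\bsalpha}(\bw)\cdot\bfm}}$ to obtain \eqref{eq:extension_exact}, and then asserts that \eqref{eq:alpha_scaling_exact} follows by taking the gradient. Your packaging of the shift as a covariance law for general $c>0$, and of the value identity as quasi-homogeneity $h(c\bw)=c\,h(\bw)+c\,w_0\log c$, is a cosmetic reorganization of the same ideas.

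The substantive point is the sign of the cross term, which you rightly singled out as the delicate step. Both of your derivations give
\begin{equation*}
\hat{\alpha}_0(\bw)=\tilde{h}(\tilde{\bw}/w_0)-\frac{1}{w_0}\,\tilde{\bw}^T\tilde{\bsalpha}(\tilde{\bw}/w_0)+\log w_0+1\:,
\end{equation*}
i.e., a \emph{minus} sign, whereas the statement \eqref{eq:alpha_scaling_exact} (and likewise \eqref{eq:alpha_scaling}) prints a plus sign. Your sign is the correct one: differentiating \eqref{eq:extension_exact} in $w_0$ produces the cross term $w_0\,\tilde{\bsalpha}(\tilde{\bw}/w_0)\cdot(-\tilde{\bw}/w_0^2)$, and the same minus sign follows from the paper's own shift relation combined with the value formula evaluated at $[1,(\tilde{\bw}/w_0)^T]^T$, exactly as in your cross-check. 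A concrete test confirms it: in slab geometry with $N=1$ and $\bsalpha=[0,1]^T$, one has $\bw=[2\sinh 1,\, 2/e]^T$, so $\hat{\alpha}_0(\bw)=0$; the minus-sign formula returns $0$, while the plus-sign formula returns $2/(e\sinh 1)\neq 0$. So your proof is sound and in fact exposes a sign typo in the printed lemma; the lower block $\tilde{\bsalpha}(\tilde{\bw}/w_0)$ and the identity \eqref{eq:extension_exact} are unaffected.
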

\begin{proof}
It follows from the definition of $h$ in \eqref{eq:entropy} and strong duality of the primal-dual pair \eqref{eq:primal}-\eqref{eq:dual} that 
\begin{equation}\label{eq:L1proof1}
h(\bw) = \hat{\bsalpha}(\bw)\cdot\bw - \vint{e^{\hat{\bsalpha}(\bw)\cdot\bfm}}\quand
\tilde{h}(\tilde{\bw}/w_0) = {h}({\bw}/w_0) = \hat{\bsalpha}(\bw/w_0)\cdot\bw/w_0 - \vint{e^{\hat{\bsalpha}(\bw/w_0)\cdot\bfm}}\:,
\end{equation}
From \eqref{eq:moment_consistency}, we have
\begin{equation}
\bw/w_0 = \hat{\bw}(\hat{\bsalpha}(\bw/w_0)) = \vint{\bfm\, e^{\hat{\bsalpha}(\bw/w_0)\cdot\bfm}}\:,
\end{equation}
which implies
\begin{equation}
\hat{\bw}(\hat{\bsalpha}(\bw))  = \bw = w_0\vint{\bfm\, e^{\hat{\bsalpha}(\bw/w_0)\cdot\bfm}} 
= \vint{\bfm\, e^{\hat{\bsalpha}(\bw/w_0)\cdot\bfm + \log w_0}}\:.
\end{equation}
Thus $\hat{\bsalpha}(\bw) = \hat{\bsalpha}(\bw/w_0) + [\log w_0,0,\dots,0]^T$. Plugging this into \eqref{eq:L1proof1} leads to
\begin{equation}
h(\bw) = \hat{\bsalpha}(\bw/w_0)\cdot\bw + w_0\log w_0 - \vint{e^{\hat{\bsalpha}(\bw/w_0)\cdot\bfm + \log w_0}}
= w_0 \,\tilde{h}(\tilde{\bw}/w_0) + w_0\log w_0\:.
\end{equation}
Then \eqref{eq:alpha_scaling_exact} can be verified by taking the gradient of $h$.
\end{proof}

The result in Lemma~\ref{lem:extension} allows us to construct an approximation $\thA$ to $\tilde{h}$ on the bounded set $\tilde{\cR}_{\bfm}$, and then approximate $h$ on $\cR_{\bfm}$ by the extension $\hA\colon\cR_{\bfm}\to\bbR$ defined as
\begin{equation}\label{eq:extension}
\hA(\bw) = w_0 \, \thA(\tilde{\bw}/w_0)  + w_0\log w_0\:, \quad\forall\bw\in\cR_{\bfm}\:.
\end{equation}
It is then left for us to prove that this $\hA$ is twice differentiable and convex with a properly chosen $\thA$, which is shown in the following theorem.

\begin{theorem}\label{thm:convexity}
Suppose that $\thA\colon \tilde{\cR}_{\bfm}\to\bbR$ is a (strictly) convex and twice differentiable function, then the reconstructed function $\hA$ defined in \eqref{eq:extension} is also (strictly) convex and twice differentiable.
\end{theorem}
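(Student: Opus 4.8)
The plan is to recognize $\hA$ as a sum of two pieces, each transparent to analyze: the \emph{perspective transform} $P(\bw):=w_0\,\thA(\tilde{\bw}/w_0)$ of $\thA$ with perspective variable $w_0$, and the scalar term $S(\bw):=w_0\log w_0$. First I would record that $\cR_{\bfm}$ is a convex cone (scaling a nonnegative density by $\lambda>0$ and summing nonnegative densities preserve nonnegativity), so that $\tilde{\bw}/w_0\in\tilde{\cR}_{\bfm}$ whenever $\bw\in\cR_{\bfm}$ and the domain on which convexity is asserted is itself convex. Next I would dispatch regularity: for $w_0>0$ the map $\bw\mapsto\tilde{\bw}/w_0$ is $C^\infty$, so $P$ is a product and composition of $C^2$ functions and is $C^2$, while $S$ is $C^\infty$ on $\{w_0>0\}$; hence $\hA=P+S\in C^2(\cR_{\bfm})$, and it suffices to examine $\nabla^2\hA$.

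The engine of the argument is an explicit block form for the two Hessians. Writing $\tilde{\bsomega}:=\tilde{\bw}/w_0$, $H:=\nabla^2\thA(\tilde{\bsomega})$, and $J:=[\,-\tilde{\bsomega}\mid I_{n-1}\,]$ (the $(n-1)\times n$ matrix sending $(\delta_0,\tilde{\boldsymbol\delta})\mapsto\tilde{\boldsymbol\delta}-\tilde{\bsomega}\,\delta_0$), a direct differentiation using $\partial_{w_0}\tilde{\bsomega}=-\tilde{\bsomega}/w_0$ yields
\[
\nabla^2 P=\tfrac{1}{w_0}\,J^T H J,\qquad \nabla^2 S=\tfrac{1}{w_0}\,e_0 e_0^T,
\]
where $e_0\in\bbR^n$ is the first coordinate vector. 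Since $w_0>0$, convexity is then immediate: $H\succeq 0$ forces $J^THJ\succeq 0$, and $e_0e_0^T\succeq 0$, so $\nabla^2\hA\succeq 0$.

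The main obstacle — and the only place the term $S$ is essential — is strict convexity, because $P$ alone is never strictly convex: being homogeneous of degree one in $\bw$, its Hessian is flat along the radial direction. Concretely, $J$ has full row rank $n-1$, so $\ker J$ is the one-dimensional span of $(1,\tilde{\bsomega}^T)^T$, which is exactly the ray through $\bw$. I would close the gap by evaluating the quadratic form directly: for $v=(\delta_0,\tilde{\boldsymbol\delta})\neq 0$,
\[
v^T\nabla^2\hA\,v=\tfrac{1}{w_0}\big[(Jv)^T H (Jv)+\delta_0^2\big].
\]
If $Jv\neq 0$ and $H\succ 0$, the first term is strictly positive; if instead $Jv=0$, then $v$ lies along the ray, which forces $\delta_0\neq 0$ (otherwise $v=0$), so the second term $\delta_0^2>0$. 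In either case the form is positive, giving $\nabla^2\hA\succ 0$.

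Thus $J^THJ$ supplies strict positivity transverse to the ray while $e_0e_0^T$ supplies it along the ray; equivalently, $\ker(J^THJ)=\ker J$ (the radial line) meets $\ker(e_0e_0^T)=e_0^\perp$ only at the origin, since $(1,\tilde{\bsomega}^T)^T\cdot e_0=1\neq 0$. This complementarity between the degenerate direction of the perspective term and the single positive direction contributed by $w_0\log w_0$ is the crux of the proof and the part that requires the most care.
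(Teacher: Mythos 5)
Your proof is correct, and its engine is the same as the paper's: the scalar identity you derive,
\begin{equation*}
\bp^T \nabla^2 \hA(\bw)\, \bp \;=\; \tfrac{1}{w_0}\Bigl[(J\bp)^T \tilde{H}(\tilde{\bsomega})\,(J\bp) + p_0^2\Bigr],
\qquad J := [\,-\tilde{\bsomega} \mid I_{n-1}\,],\quad \bp = [p_0,\tilde{\bp}^T]^T,
\end{equation*}
is precisely the paper's completed square in \eqref{eq:hessianPSD}, since $J\bp = \tilde{\bp} - \tfrac{p_0}{w_0}\tilde{\bw}$, and your case analysis for strictness ($J\bp\neq 0$ versus $J\bp=0$, the latter forcing $p_0\neq 0$) is the same dichotomy the paper uses to show the inequality is an equality only at $\bp=\mathbf{0}$. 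What differs is the route to that identity and what it illuminates. The paper computes the Hessian blocks \eqref{eq:hessianTerms} entry by entry and then completes the square by hand; you instead split $\hA = P + S$ with $P(\bw)=w_0\thA(\tilde{\bw}/w_0)$ the perspective transform and $S(\bw)=w_0\log w_0$, and factor $\nabla^2 P = \tfrac{1}{w_0}J^T\tilde{H}J$, $\nabla^2 S = \tfrac{1}{w_0}e_0e_0^T$. This packaging buys two things the paper leaves implicit: first, a structural explanation of why the perspective part alone is only positive semidefinite (its kernel is exactly the radial ray through $\bw$, as forced by degree-one homogeneity) and hence why the $w_0\log w_0$ term is indispensable for strict convexity; second, the strictness argument reduces to the one-line observation that $\ker(J^T\tilde{H}J)=\operatorname{span}\{(1,\tilde{\bsomega}^T)^T\}$ and $\ker(e_0e_0^T)=e_0^{\perp}$ meet only at the origin. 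You also state explicitly a hypothesis the paper glosses over: that $\cR_{\bfm}$ is a convex cone, which is needed both so that $\tilde{\bw}/w_0\in\tilde{\cR}_{\bfm}$ (i.e., \eqref{eq:extension} makes sense) and so that pointwise positive (semi)definiteness of the Hessian actually yields global (strict) convexity of $\hA$. The paper's computation is more elementary and self-contained; yours is more conceptual and transfers verbatim to any perspective-type extension of a convex function.
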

\begin{proof}
First, the twice differentiability of $\hA$ directly follows from \eqref{eq:extension} and the twice differentiability of $\tilde{\hA}$.
To show that $\hA$ is convex when $\thA$ is convex, we prove that $H(\bw)$, the Hessian of $\hA$, is positive semidefinite for all $\bw\in\cR_{\bfm}$.
Let $H$ take the form
\begin{equation}
{H}(\bw) = 
\left[\begin{array}{cc}
a(\bw) &\bb^T(\bw)\\
\bb(\bw) &M(\bw)
\end{array}\right]\:,
\quad\text{with}\quad
a \colon\cR_{\bfm}\to\bbR\:, \quad
\bb \colon\cR_{\bfm}\to\bbR^{n-1}\:, \quad
M = \colon\cR_{\bfm}\to\bbR^{(n-1)\times(n-1)}\:.
\end{equation}
Then direct calculation of the second derivatives leads to
\begin{equation}\label{eq:hessianTerms}
a(\bw)  = \frac{1}{w_0^{3}}\tilde{\bw}^{T} \tilde{H}(\tilde{\bsomega}) \tilde{\bw}+\frac{1}{w_0} \:, \quad
\bb(\bw)  = -\frac{1}{w_0^2}\tilde{H}(\tilde{\bsomega}) \tilde{\bw}\:, \quand
M(\bw)  = \frac{1}{w_0}\tilde{H}(\tilde{\bsomega})\:,
\end{equation}
where $\bw=:[w_0,\tilde{\bw}^T]^T$, $\tilde{\bsomega}:=\tilde{\bw}/w_0$, and $\tilde{H}$ is the Hessian of $\thA$.
The terms in \eqref{eq:hessianTerms} are well-defined since $w_0>0$ for any $\bw\in\cR_{\bfm}$. 
We now show that $H$ is positive semi-definite by verifying that $\bp^T H(\bw) \bp\geq0$ for all $\bw\in\cR_{\bfm}$, $\bp\in\bbR^n$. Let $\bp=:[p_0,\tilde{\bp}]^T$, then
\begin{equation}\label{eq:hessianPSD}
\begin{alignedat}{2}
\bp^T H(\bw) \bp &= p_0^2 \,a(\bw) + 2\,p_0\, \tilde{\bp}^T\bb(\bw) + \tilde{\bp}^T M(\bw) \tilde{\bp}\\
&= \frac{p_0^2}{w_0^{3}}\tilde{\bw}^{T} \tilde{H}(\tilde{\bsomega}) \tilde{\bw}+\frac{p_0^2}{w_0}  -  2\,\frac{p_0}{w_0^2}\, \tilde{\bp}^T\tilde{H}(\tilde{\bsomega}) \tilde{\bw} + \frac{1}{w_0}\tilde{\bp}^T \tilde{H}(\tilde{\bsomega}) \tilde{\bp}\\
&=\frac{p_0^2}{w_0}  + \frac{1}{w_0}
\left(\frac{p_0}{w_0}\tilde{\bw} - \tilde{\bp}\right)^{T} \tilde{H}(\tilde{\bsomega}) \left(\frac{p_0}{w_0}\tilde{\bw} - \tilde{\bp}\right)\geq 0\:,
\end{alignedat}
\end{equation}
where the inequality follows from the positive semi-definiteness of $\tilde{H}$ (convexity of $\thA$).

Finally, we prove the strict convexity of $\hA$ provided that $\thA$ is strictly convex. 
Suppose $\thA$ is strictly convex, then its Hessian $\tilde{H}$ is positive definite. Thus the inequality in \eqref{eq:hessianPSD} becomes an equality only when $p_0=0$ and $\frac{p_0}{w_0}\tilde{\bw} - \tilde{\bp} = \mathbf{0}$, which implies that, for all $\bw\in\cR_{\bfm}$, $\bp^T H(\bw) \bp =0$ if and only if $\bp=\mathbf{0}$. Therefore, $H$ is positive definite and $\hA$ is strictly convex.
\end{proof}

To this point, we have shown that a twice differentiable, convex approximation $\hA$ to the Maxwell-Boltzmann entropy $h$ can be extended from a twice differentiable, convex function $\thA$ that approximates $\tilde{h}$, the restriction of $h$ on $\tilde{\cR}_{\bfm}$.
Since $\tilde{\cR}_{\bfm}$ is a bounded set, it is more tractable to generate a dataset that provides a good coverage of $\tilde{\cR}_{\bfm}$. The proposed procedure is summarized as follows.

\renewcommand{\thealgocf}{}

\begin{algorithm}[H]
	\normalsize
	\SetAlgorithmName{Approximation procedure}{} 
	\medskip
	\caption{Steps for constructing $\hA$ from data}
	Sample a normalized dataset $\{(\tilde{\bsomega}^{(i)},\tilde{h}(\tilde{\bsomega}^{(i)}),\tilde{\bsalpha}(\tilde{\bsomega}^{(i)}))\}_{i\in \cI}$ with 
$\tilde{\bsomega}^{(i)} \in\tilde{\cR}_{\bfm}$\;
    Construct a twice differentiable, convex approximation $\thA$ from $\{(\tilde{\bsomega}^{(i)},\tilde{h}(\tilde{\bsomega}^{(i)}),\tilde{\bsalpha}(\tilde{\bsomega}^{(i)}))\}_{i\in \cI}$\;
    Extend $\thA\colon\tilde{\cR}_{\bfm}\to\bbR$ to $\hA\colon\cR_{\bfm}\to\bbR$ by taking	
$\hA(\bw) = w_0 \, \thA(\tilde{\bw}/w_0)  + w_0\log w_0$ for all $\bw\in\cR_{\bfm}$ as given in \eqref{eq:extension}.
This leads to an approximate multiplier function 
\begin{equation}
\label{eq:alpha_scaling}
\halphaA(\bw):=\hA^\prime(\bw) = \left[\begin{array}{c}
\thA(\tilde{\bw}/w_0) + \frac{1}{w_0} \tilde{\bw}^T\tilde{\bsalpha}_{\textup{a}}(\tilde{\bw}/w_0) + \log w_0 + 1\\
\tilde{\bsalpha}_{\textup{a}}(\tilde{\bw}/w_0)
\end{array}\right]\:,
\end{equation}
where $w_0$, $\thA$, and its gradient $\tilde{\bsalpha}_{\textup{a}}$ are needed to compute the gradient of $\hA$\;
\label{algo:extension}
\end{algorithm}	

A flow chart that describes the process of mapping any $\bw\in\cR_{\bfm}$ to the approximate multiplier $\hat{\bsalpha}_{\textup{a}}(\bw)$ using the constructed approximation $\thA$ is included in Figure~\ref{fig:flow_chart}. We next discuss two approaches to construct twice differentiable and convex $\thA$ from data.
\begin{figure}[h]
	\centering
	\includegraphics[width=\linewidth]{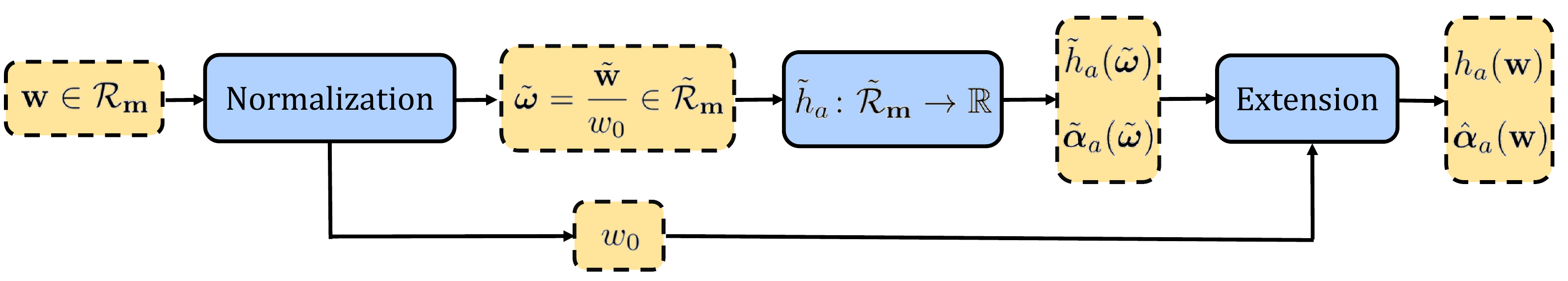}
	\caption{Flow chart: The process of mapping a realizable moment $\bw$ to the approximate multiplier $\hat{\bsalpha}_{\textup{a}}(\bw)$ from the data-driven approximate entropy function $\hA$ is illustrated. The data-driven approximation $\thA$ on the normalized realizable set $\tilde{\cR}_{\bfm}$ is evaluated at the normalized moment $\tilde{\bsomega}$. The result is then extended to the full realizable set $\cR_{\bfm}$ using the formulas given in \eqref{eq:extension} and \eqref{eq:alpha_scaling}.}
	\label{fig:flow_chart}
\end{figure}

\subsection{Data-driven approximations}
\label{subsec:approximations}

In this subsection, we present two approaches to construct twice differentiable and convex approximations to the Maxwell-Boltzmann entropy from the normalized dataset $\{(\tilde{\bsomega}^{(i)},\tilde{h}(\tilde{\bsomega}^{(i)}),\tilde{\bsalpha}(\tilde{\bsomega}^{(i)}))\}_{i\in \cI}$. 
Section~\ref{subsubsec:splines} summarizes the construction of one-dimensional rational cubic spline interpolants proposed in \cite{gregory1986shape} that maintain the convexity of the data.
Section~\ref{subsubsec:NN} describes the procedure for building an approximate entropy using neural networks, which works in arbitrary dimensions.

From here on, we focus on a simplified kinetic model for neutral particle systems with uniform particle traveling speed in a purely scattering medium with slab geometry, which takes the form
\begin{equation}\label{eq:kinetic_eqn_slab}
\p_t f(t,x,\mu) + \mu \p_x f = \sigma_{\textup{s}}(\frac{\vint{f}}{2}-f)\:,
\end{equation}
where the position $x\in\bbR$, angle $\mu\in[-1,1]$, and the scattering crosssection $\sigma_{\textup{s}}>0$ is a constant in $t$, $x$, and $\mu$.
With the basis function $m_i(\mu)$ in $\bfm$ chosen to be the $i$-th order Legendre polynomials, $i=0,\dots,N$, the moment system for \eqref{eq:kinetic_eqn_slab} with approximate entropy $\hA$ can be derived following \eqref{eq:reg_moment_eqn_u}, which gives
\begin{equation}\label{eq:moment_eqn_slab}
\p_t \bu_{\textup{a}} + \p_x \vint{\mu \bfm G_{\halphaA(\bu_{\textup{a}})}} = - \sigma_{\textup{s}} R \bu_{\textup{a}}  \:,
\end{equation}
where $G_{\halphaA(\bw)}= e^{\halphaA(\bw)\cdot\bfm}$ and $\halphaA(\bw):=\hA^\prime(\bw)$ for all $\bw\in\cR_{\bfm}$, and the diagonal matrix $R = \diag(0,1,\dots,1)\in\bbR^{n\times n}$.
This simplification allows for constructing the approximate entropy $\hA$ using the one-dimensional shape-preserving splines in Section~\ref{subsubsec:splines} when the moment order $N=1$. On the other hand, the neural network approach considered in Section~\ref{subsubsec:NN} can approximate the entropy function in multi-dimensions, thus is not limited to the slab geometry case and can work on $N>1$.

\subsubsection{Shape-preserving splines}
\label{subsubsec:splines}

When moment order $N=1$, the moment $\bu(t,x)$ in \eqref{eq:moment_eqn_slab} is a vector in $\bbR^2$ for each $(t,x)$, i.e., $n=2$. 
With $\bfm$ the Legendre polynomial basis, the realizable set becomes $\cR_{\bfm}=\{\bw\in\bbR^2 \colon\,|w_1| < w_0\}$, and the normalized realizable set is simply $\tilde{\cR}_{\bfm}=[-1,1]$. 
The problem then reduces to: finding a twice differentiable, convex function $\thA\colon[-1,1]\to\bbR$ that approximates $\tilde{h}$ given the position, function value, and derivative data $\{(\tilde{\bsomega}^{(i)},\tilde{h}(\tilde{\bsomega}^{(i)}),\tilde{\bsalpha}(\tilde{\bsomega}^{(i)}))\}_{i\in \cI}$, where $\tilde{\bsomega}^{(i)}\in[-1,1]$ for all $i\in\cI$.
The shape-preserving spline interpolation method proposed in \cite{gregory1986shape} constructs a one-dimensional cubic spline interpolant $\thSp(\tilde{\bsomega})$ that is twice differentiable and preserves the monotonicity and convexity of the provided data.
Specifically, for $\{(\tilde{\bsomega}^{(i)},\tilde{h}(\tilde{\bsomega}^{(i)}),\tilde{\bsalpha}(\tilde{\bsomega}^{(i)}))\}_{i\in \cI}$ sampled from a convex function $\tilde{h}$, the cubic spline $\thSp(\tilde{\bsomega})$ is constructed such that, for all $i\in\cI$,
\begin{equation}\label{eq:spline_cond}
\thSp(\tilde{\bsomega}^{(i)}) = \tilde{h}(\tilde{\bsomega}^{(i)}), \quad
\tilde{h}_{\textsf{Sp}}^{\prime}(\tilde{\bsomega}^{(0)}) = \tilde{\bsalpha}(\tilde{\bsomega}^{(0)}), \quad
\tilde{h}_{\textsf{Sp}}^{\prime}(\tilde{\bsomega}^{(|\cI|)}) = \tilde{\bsalpha}(\tilde{\bsomega}^{(|\cI|)}), \quand
\tilde{h}_{\textsf{Sp}}^{\prime\prime}(\tilde{\bsomega}^{(i)}_-) = \tilde{h}_{\textsf{Sp}}^{\prime\prime}(\tilde{\bsomega}^{(i)}_+),
\end{equation}
where the first equality is the interpolation condition, the second and third equalities enforce exact derivative at the two end-points, and the last equality guarantees that $\thSp\in C^2([-1,1])$ with $\tilde{\bsomega}_-$ and $\tilde{\bsomega}_+$ denoting the left and right limit at $\tilde{\bsomega}$, respectively.
To impose convexity, the spline shape parameters are chosen such that 
$
\tilde{h}_{\textsf{Sp}}^{\prime\prime}(\tilde{\bsomega})\geq0 
$
between the interpolation points.
The spline coefficients are then obtained by solving the equations in \eqref{eq:spline_cond} via a Gauss-Seidel approach. See \cite{gregory1986shape} for details.
Once the spline approximation $\tilde{h}_{\textsf{Sp}}$ is constructed, it is straightforward to compute $\tilde{\bsalpha}_{\textsf{Sp}}$ by taking the derivative of $\tilde{h}_{\textsf{Sp}}$.
The approximate multiplier $\hat{\bsalpha}_{\textsf{Sp}}$ can then be reconstructed from $\tilde{h}_{\textsf{Sp}}$ and $\tilde{\bsalpha}_{\textsf{Sp}}$ via \eqref{eq:alpha_scaling}.

While this shape-preserving spline approach gives stable, accurate, and twice differentiable convex approximations to convex functions, its application is limited to one-dimensional functions. 
To the authors' knowledge, there is no existing twice differentiable, convexity-preserving spline approximations in dimensions higher than one.

\subsubsection{Neural networks}
\label{subsubsec:NN}
To handle the more general case that $\tilde{\cR}_{\bfm}$ is in a multi-dimensional space, we use neural networks to approximate the restricted Maxwell-Boltzmann entropy. We denote the neural network approximation as $\thNN(\tilde{\bsomega})$.
For simplicity, we construct $\thNN(\tilde{\bsomega})$ via training standard feed-forward, fully-connected neural networks, i.e., multilayer perceptrons.
Specifically, we consider neural networks of the form
\begin{equation}\label{eq:NN}
\thNN(\tilde{\bsomega};\bstheta) =   (L^{D+1} \circ L^{D} \circ \dots \circ L^{1} \circ L^{0})(\tilde{\bsomega} )\:\text{ with }
L^{\ell}(\bz) =  \sigma(\bsphi^{\ell} \cdot \bz+ \bb^{\ell})\:,\quad
\ell=0,\dots,D+1\:,
\end{equation}
where $\sigma$ denotes the activation function of the network, $\bsphi^\ell$ and $\bb^\ell$ denote the weights and biases at the $\ell$-th layer $L^\ell$, and $\bstheta$ denotes the collection of all parameters $\{(\bsphi^\ell,\bb^\ell)\}_{\ell=0}^D$. 
The number of network parameters depends on the size of the network, e.g., a network of depth $D$ and width $W$ has weights
\begin{equation}
\bsphi^{0} \in \bbR^{(n-1)\times W}, \quad
\bsphi^{\ell} \in \bbR^{W\times W}, \,\ell=1,\dots,D\,,\quand
\bsphi^{D+1} \in \bbR^{W\times 1}, 
\end{equation}
and biases $\bb^{\ell}\in\bbR^{W}$, $\ell=0,\dots,D$ and $\bb^{D+1}\in\bbR$, which sum up to $DW^2+(n+D+1)W+1$ parameters.
In order to guarantee that the network approximation $\thNN$ is twice differentiable, we choose to use the smooth softplus activation function, i.e., $\sigma(\bz) = \log ( 1 + \exp(\bz) )$.
In general, the network function $\thNN$ in \eqref{eq:NN} is not guaranteed to be convex with respect to $\tilde{\bsomega}$. 
In this paper, we verify the convexity of the constructed network approximations by confirming that the Hessian of $\thNN$ is positive semidefinite on a dense test dataset.
To enforce convexity of $\thNN$, one can either penalize negative eigenvalues of the Hessian of $\thNN$ in the training process, or use special network architectures, e.g., \cite{amos2017input}, that guarantee convexity.
The former imposes convexity as a soft constraint on the training dataset and is feasible only when the eigenvalues of Hessian can be obtained efficiently.
The latter gives networks that are convex on the entire input domain, while the special architectures and limitations of the parameters may complicate the training process.

\subsection{Data acquisition strategy}
\label{subsec:sampling_strategy}

The spline and neural network approximations discussed in Section~\ref{subsec:approximations} both rely on some normalized dataset $\{(\tilde{\bsomega}^{(i)},\tilde{h}(\tilde{\bsomega}^{(i)}),\tilde{\bsalpha}(\tilde{\bsomega}^{(i)}))\}_{i\in \cI}$.  
A naive approach to construct the normalized data set is to first sample $\{\tilde{\bsomega}^{(i)}\}_{i\in \cI}$ from $\tilde{\cR}_{\bfm}$, 
evaluate $h$ and $\hat{\bsalpha}$ at $\bw=[1,\tilde{\bsomega}^T]^T$ for each $\tilde{\bsomega}\in\{\tilde{\bsomega}^{(i)}\}_{i\in \cI}$, and then compute $\{\tilde{h}(\tilde{\bsomega}^{(i)})\}_{i\in \cI}$ and $\{\tilde{\bsalpha}(\tilde{\bsomega}^{(i)})\}_{i\in \cI}$ via \eqref{eq:restriction} and \eqref{eq:alpha_scaling_exact}.
This approach is computationally expensive since the evaluation of $h$ and $\hat{\bsalpha}$, i.e., the solution of the optimization problem \eqref{eq:MB_optimization}, is required at each sampled moment. Further, the accuracy of the dataset is limited by the tolerance in the optimization procedure.

To address this issue, we propose to first sample the normalized multipliers $\{\tilde{\bsalpha}^{(i)}\}_{i\in \cI}$ and then compute the associated normalized moments $\{\tilde{\bsomega}^{(i)}\}_{i\in \cI}$ and restricted entropy function values $\{\tilde{h}(\tilde{\bsomega}^{(i)})\}_{i\in \cI}$ from $\{\tilde{\bsalpha}^{(i)}\}_{i\in \cI}$. This approach avoids solving the optimization problem \eqref{eq:MB_optimization} in the data acquisition procedure, and the accuracy of the data is only restricted by the quadrature rule used to perform numerical integrations. 
Specifically, the proposed sampling strategy is as follows.

\begin{algorithm}[H]
	\normalsize
	\SetAlgorithmName{Sampling strategy}{} 
	\medskip
	\caption{Steps for efficient sampling data from $\tilde{\cR}_{\bfm}$}
	Sample the normalized multipliers $\{\tilde{\bsalpha}^{(i)}\}_{i\in \cI}$ from $\bbR^{n-1}$\;
    For every $i\in \cI$, set $\bsalpha^{(i)} = [\alpha_0^{(i)}, (\tilde{\bsalpha}^{(i)})^T]$ with $\alpha_0^{(i)}$ calculated such that $w_0^{(i)}:=\vint{m_0 G_{\bsalpha^{(i)}}} = 1$\;
    For every $i\in \cI$, compute the moment and entropy function value associated to $\bsalpha^{(i)}$ by taking
\begin{equation}	\label{eqn:alpha_to_u}
\bw^{(i)}=\hat{\bw}(\bsalpha^{(i)}) = \vint{\bfm G_{\bsalpha^{(i)}}} \quand
h(\bw^{(i)}) = \vint{\eta(G_{\hat{\bsalpha}(\bw^{(i)})})} = \vint{\eta(G_{\bsalpha^{(i)}})}\:.
\end{equation}
These relations follow from \eqref{eq:primal}--\eqref{eq:entropy} for a general entropy $\eta$\;
Denote $\bw^{(i)}=[w_0^{(i)}, (\tilde{\bw}^{(i)})^T]^T$. Since $w_0^{(i)}=1$, the normalized moment $\tilde{\bsomega}^{(i)} = \tilde{\bw}^{(i)}$ and the restricted entropy function value $\tilde{h}(\tilde{\bsomega}^{(i)})=h(\bw^{(i)})$\;
	\label{algo:sampling}
\end{algorithm}

\section{Numerical results}
\label{sec:num_results}

Numerical results for the proposed data-driven entropy-based closures are presented in this section. 
Section~\ref{subsec:implementation} gives the implementation details, including the sampling of datasets and the training setting for the neural networks.
Section~\ref{subsec:closure_summary} summarizes the constructed closures and their approximation accuracy.
Section~\ref{subsec:realizable_boundary_test} compares the computation time between the neural network closures and the standard optimization approach for Maxwell-Boltzmann entropy-based closures.
Finally, the data-driven closures are applied to solve a plane source benchmark problem, and the results are reported in Section~\ref{subsec:planesource}.

\subsection{Implementation details}
\label{subsec:implementation}

\subsubsection{Sampled data}
\label{subsubsec:data}

We follow the approach outlined in Section~\ref{subsec:sampling_strategy} to sample the data needed in the construction of data-driven closures considered in this section. With the proposed approach, we only need to determine a strategy for sampling the normalized multipliers $\{\tilde{\bsalpha}^{(i)}\}_{i\in \cI}$, and the remaining parts of the data can be computed accordingly.
In the tests considered in this section, we sample each entry of $\tilde{\bsalpha}$ on a uniform grid of size $s$ in a prescribed interval $[\tilde{\alpha}_{\min},\tilde{\alpha}_{\max}]$, e.g., when $\tilde{\bsalpha}\in\bbR^{n-1}$, then each sample $\tilde{\bsalpha}^{(i)}$ is a point on the uniform tensor mesh of size $s^{n-1}$ in the hypercube $[\tilde{\alpha}_{\min},\tilde{\alpha}_{\max}]^{n-1}$. 
With $\tilde{\alpha}_{\min}<0$ sufficiently small and $\tilde{\alpha}_{\max}>0$ sufficiently large, this sampling strategy allocates more data points near the boundary of the normalized realizable set $\tilde{\cR}_{\bfm}$ for the Maxwell-Boltzmann entropy $h_{\MB}$, which significantly improves the quality of data-driven closures since $h_{\MB}$ is known to vary drastically near the boundary of $\tilde{\cR}_{\bfm}$.

We choose $[\tilde{\alpha}_{\min},\tilde{\alpha}_{\max}]=[-65, 65]$ when training all data-driven closures for the case $N=1$. This choice covers normalized moments in $[-1+0.015,1-0.015]$, whereas the normalized realizable set $\tilde{\cR}_{\bfm}=[-1,1]$.
For the spline closures, the data size $s$ is equivalent to the size of the spline approximation, i.e., the number of interpolation points.
For the network closures, we choose the data size $s=10,000$ for training networks with various depths and widths.
As for training neural network closures for the case $N=2$, we choose $[\tilde{\alpha}_{\min},\tilde{\alpha}_{\max}]=[-10, 10]$ with $100$ grid points in the $\tilde{\alpha}_1$ direction and $50$ grid points in the $\tilde{\alpha}_2$ direction.

In Section~\ref{subsec:closure_summary}, we report the training and test errors of data-driven closures constructed with the training datasets described above. The test datasets are sampled using the same approach, but with a denser uniform grid for each entry of $\tilde{\bsalpha}$.
In addition, to test the accuracy of the reconstruction steps in \eqref{eq:extension} and \eqref{eq:alpha_scaling}, we allow the zero-th moment $w_0$ to vary in the test datasets while $w_0$ in the training datasets is always normalized to one.
When $N=1$, the spline and network approximations are both tested on a dataset with 160 $w_0$ values evenly distributed on interval $[10^{-8}, 8]$.
For each $w_0$ value, $\tilde{\bsalpha}$ is sampled on a uniform grid of 52,000 points with $[\tilde{\alpha}_{\min},\tilde{\alpha}_{\max}]=[-65, 65]$.
When $N=2$, the test dataset for network approximations also has 160 evenly distributed $w_0$ in $[10^{-8}, 8]$.
For each $w_0$, we choose $[\tilde{\alpha}_{\min},\tilde{\alpha}_{\max}]=[-10, 10]$ and sample $\tilde{\bsalpha}$ on a uniform tensor grid with 200 grid points in both the $\tilde{\alpha}_1$ and $\tilde{\alpha}_2$ directions.

\subsubsection{Neural network training}
\label{subsubsec:training}
The neural network approximations are implemented as Keras models with tensorflow~2.0 in Python.

\paragraph{Network initialization}  
In the neural network training procedure, we initialize the network parameters using a strategy considered in \cite{kumar2017weight}, which aims to normalize the variance of each network layer output with respect to a standard normal random variable input to one. 
This strategy is motivated from the common Kaiming initialization \cite{he2015delving} for neural networks with the Rectified Linear Unit (ReLU) activation function.
We adopt the initialization strategy and apply it on the softplus activation function, $\sigma(\bz) = \log ( 1 + \exp(\bz) )$, which is used here to guarantee the regularity of network approximations.
Specifically, for a network of depth $D$ and width $W$, we initialize all biases to be zero, i.e., $\bb^{\ell} = 0$, $\ell= 0,\dots, D+1 $, and draw the initial weights $\bstheta^{\ell}$ from $\cN(0,s^2_\ell)$, the zero-mean normal distribution with variance $s^2_\ell$, where the variance for each layer is given by
\begin{equation}
s^2_{0} = \frac{1}{(n-1)  \sigma'(0)^2  (1+\sigma(0)^2) }, \quand
s^2_{\ell} = \frac{1}{W  \sigma'(0)^2  (1+ \sigma(0)^2) }, \quad  \ell= 1,\dots, D+1 \:.
\end{equation}

\paragraph{Training and validation data}
We follow the standard practice and split the sampled data in Section~\ref{subsubsec:data} into the training and validation datasets. 
In each test, we uniformly draw 10\% of the sampled data to be the validation data, i.e., a 90--10 split.
From here on, we denote the index sets of the training and validation datasets as $\cI^{\train}$ and $\cI^{\validation}$, respectively.

\paragraph{Network output --- multiplier and symmetry}
The network closure aims to provide an approximate multiplier $\hat{\bsalpha}_{\textsf{NN}}(\bw)$ at a given realizable moment $\bw\in\cR_{\bfm}$. It follows from \eqref{eq:alpha_scaling} that both the network function value $\thNN$ and the network gradient $\tilde{\bsalpha}_{\textsf{NN}}$ are needed to compute $\hat{\bsalpha}_{\textsf{NN}}(\bw)$.
To obtain the gradient $\tilde{\bsalpha}_{\textsf{NN}}$ from the fully-connected network $\thNN$ discussed in Section~\ref{subsubsec:NN}, we append at the end of the fully-connected network a custom layer that performs automatic differentiation by calling \texttt{GradientTape.gradient()} in tensorflow.
With the choice of Legendre polynomials as moment basis $\bfm$, the entropy function $h$ is known to be symmetric with respect to the moments of odd orders, e.g., $h(\bw)=h(\bw^*)$ if $w_i^*=w_i$ for even $i$ and $w_i^*=-w_i$ for odd $i$.
To impose this symmetry in the neural network approximations, we define
\begin{equation}\label{eq:symmetry}
{\thNN}^{\text{sym}}(\tilde{\bsomega}) := \frac{1}{2} ({\thNN}(\tilde{\bsomega}) + {\thNN}(\tilde{\bsomega}^*))\quad\text{for all } \tilde{\bsomega} \in \tilde{\cR}_{\bfm}\:,
\end{equation}
where $\tilde{\omega}_i^*=\tilde{\omega}_i$ for even $i$ and $\tilde{\omega}_i^*=-\tilde{\omega}_i$ for odd $i$.
This symmetric network approximation is used in the plane source tests discussed in Section~\ref{subsec:planesource}.

\paragraph{Loss function} 
The training procedure aims to find the optimal network parameter $\bstheta$ by minimizing a loss function. The loss function considered here takes the form
\begin{equation}\label{eq:loss_function}
L(\bstheta;\cI) = E_h^2(\bstheta;\cI) + \lambda(\cI) E_{\bw}^2(\bstheta;\cI)\:,
\end{equation}
where $\lambda(\cI) = (\sum_{i\in\cI}{\| \bw^{(i)} \|^{2}})^{-1} $ with $\bw^{(i)} = [1,(\tilde{\bsomega}^{(i)})^T]^T$ and the errors are defined as
\begin{equation}\label{eq:error_def}
E_h^2(\bstheta;\cI):= \sum_{i \in \cI} | \thNN(\tilde{\bsomega}^{(i)};\bstheta) - \tilde{h}(\tilde{\bsomega}^{(i)})|^2, \quad 
E_{\bw}^2(\bstheta;\cI):=\sum_{i \in \cI} \norm{ {\bw}_{\textsf{NN}}(\tilde{\bsomega}^{(i)};\bstheta) - \bw^{(i)}}^{2}.
\end{equation}
Here ${\bw}_{\textsf{NN}}(\tilde{\bsomega}^{(i)};\bstheta)$ is the approximate moment in $\cR_{\bfm}$ given by the neural network closure, i.e., ${\bw}_{\textsf{NN}}(\tilde{\bsomega}^{(i)};\bstheta):=\vint{\exp(\bfm\cdot\hat{\bsalpha}_{\textsf{NN}}(\tilde{\bsomega}^{(i)};\bstheta)}$ with $\hat{\bsalpha}_{\textsf{NN}}$ the approximate multiplier that can be computed using $\thNN$ and $\tilde{\bsalpha}_{\textsf{NN}}$ via \eqref{eq:alpha_scaling}.
We choose to include the moment reconstruction error $E_{\bw}$ in the loss function rather than the multiplier error
\begin{equation}
E_{\bsalpha}^{2}(\bstheta;\cI):=\sum_{i \in \cI} \| \hat{\bsalpha}_{\textsf{NN}}(\tilde{\bsomega}^{(i)};\bstheta) - \hat{\bsalpha}^{(i)}\|^2,
\end{equation}
because, in the moment system \eqref{eq:reg_moment_eqn_u}, the closures are used to evaluate the flux term and the collision term, and both of which are integrated quantities over the velocity/angular space. Thus we conjecture that the moment reconstruction error $E_{\bw}$ would be a more reliable measure for the closure accuracy than the multiplier error $E_{\bsalpha}$. We will verify this conjecture in the plane source tests reported in Section~\ref{subsec:planesource}.

\paragraph{Hyperparameters and optimizer} 
The networks were scheduled to be trained over 15,000 epochs with a batch size 50. The training process terminates early if one of the following two criteria is satisfied: (i) the validation error $E_{\bw}^2(\bstheta,\cI^{\validation})$ is less than tolerance $10^{-8}$, and (ii) there is no improvement greater than $10^{-9}$ on $E_{\bw}^2(\bstheta,\cI^{\validation})$ in the past 1,500 epochs.
To minimize the loss function \eqref{eq:loss_function}, the Adam optimizer \cite{kingma2014adam}, a first-order stochastic gradient descent algorithm, is used with an adaptive learning rate $r = 10^{-3+(m/5000)}$, where $m$ is the counter for the current epoch.

\subsection{Constructed data-driven closures}
\label{subsec:closure_summary}

Here we list several data-driven closures constructed using the training datasets specified in Section~\ref{subsubsec:data}. The neural network closures are trained with the specifics detailed in Section~\ref{subsubsec:training} and the spline closures are built with the procedure described in Section~\ref{subsubsec:splines}, where a Gauss-Seidel approach is used to solve for the spline coefficients up to a $10^{-12}$ relative tolerance.
From here on, we denote the neural network closures of various sizes as $\texttt{NN}_N^{D\times W}$, where $D$ and $W$ are respectively the depth and width of the network, and $N$ is the moment order. The spline closures are denoted as $\texttt{S}^P$, where $P$ is the number of interpolation points in the spline.
Tables~\ref{table:constructed_summary_by_error_M1} and \ref{table:constructed_summary_by_size_M1} report the training and test errors of various spline and neural network closures for moment order $N=1$. In both tables, the sizes are the number of parameters in the closures. For neural network closures, the formula is given in Section~\ref{subsubsec:NN}. For spline closures, the sizes are identical to the number of interpolation points.
Here the reported training and test errors for the entropy function values, moments, and multipliers, are the relative root-mean-square errors (RMSEs), e.g., the training and test errors in function value are defined respectively as
\begin{equation}
\textstyle
\Err_{h}^{\train} = (\sum_{i\in\cI^{\train}}|h_{\textup{a}}(\tilde{\bsomega}^{(i)})|^2)^{-\frac{1}{2}} E_{h}(\bstheta,\cI^{\train})\:,\quand
\Err_{h}^{\test} = (\sum_{i\in\cI^{\test}}|h_{\textup{a}}(\tilde{\bsomega}^{(i)})|^2)^{-\frac{1}{2}} E_{h}(\bstheta,\cI^{\test})\:,
\end{equation}
where $E_{h}$ is the RMSE as defined in the loss function \eqref{eq:loss_function}. 
The test and training errors for the moments and multipliers are all defined analogously.
Here the test datasets for $N=1$ and $2$ are also specified in Section~\ref{subsubsec:data}.

Table~\ref{table:constructed_summary_by_error_M1} compares the spline and neural network closures that have testing moment errors ($\Err_{\bw}^{\test}$) around $10^{-2}$, $10^{-3}$, and $10^{-4}$. Table~\ref{table:constructed_summary_by_size_M1} compares the accuracy of the spline and neural network closures that are of similar sizes (around 100, 500, and 1000). 
The results reported in these two tables suggest that neural network approximations generally need many more parameters to achieve similar accuracy as the spline approximations. When the two approximations are of similar size, the test errors of the spline approximation are at least two orders of magnitude lower than the ones of the network approximation, and the different increases as the number of parameter grows. We note that, for spline approximations, $\Err_h^{\train}$ is zero by construction while $\Err_{\bsalpha}^{\train}$ is nonzero since the spline interpolation requires derivative to be exact only at the end points.

\begin{table}[h]
	\centering 
	\begin{tabular}{|l|c|c|c|c|c|c|}
		\toprule
		Closure & $\texttt{NN}^{1\times15}_{1}$ & $\texttt{NN}^{3\times15}_{1}$ & $\texttt{NN}^{5\times30}_{1}$ & $\texttt{S}^{30}$ & $\texttt{S}^{60}$ & $\texttt{S}^{130}$ \\
		\hline
		Size         				      &             286 &             766 &            4741 &        30 &        60 &        130 \\ \hline
		$\Err_h^{\train}$				&        6.28e-3 &        5.11e-4 &        1.14e-4 &  \,\,\,\,\,0e0 &  \,\,\,\,\,0e0 & \,\,\,\,\,0e0 \\
		$\Err_{\bw}^{\train}$           &        7.31e-3 &        7.70e-4 &        1.93e-4 &  5.61e-3 &  4.52e-4 &   1.91e-5 \\
		$\Err_{\hat{\bsalpha}}^{\train}$  &        1.00e-1 &        1.62e-2 &        4.36e-3 &  1.03e-3 &  7.02e-5 &   3.29e-6 \\
		$\Err_h^{\test}$               &        3.03e-3 &        2.46e-4 &        5.49e-5 &  4.30e-3 &  2.28e-4 &   9.48e-6 \\
		$\Err_{\bw}^{\test}$            &        7.31e-3 &        7.69e-4 &        1.92e-4 &  1.06e-2 &  9.36e-4 &   7.86e-5 \\
		$\Err_{\hat{\bsalpha}}^{\test}$   &        1.01e-1 &        1.64e-2 &        4.35e-3 &  1.11e-3 &  9.69e-5 &   8.67e-6 \\
		\bottomrule
	\end{tabular}
 	\caption{Training and test errors in terms of entropy function values ($h$), moments ($\bw$), and multipliers ($\hat{\bsalpha}$) for neural network and spline approximations in the case $N=1$ with testing moment errors ($\Err_{\bw}^{\test}$) around $10^{-2}$, $10^{-3}$, and $10^{-4}$.} 
	\label{table:constructed_summary_by_error_M1}
\end{table}

\begin{table}[h]
	\centering 
	\begin{tabular}{|l|c|c|c|c|c|c|}
		\toprule
		Closure & $\texttt{NN}^{2\times15}_1$ & $\texttt{NN}^{0\times45}_1$ & $\texttt{NN}^{4\times15}_1$ & $\texttt{S}^{100}$ & $\texttt{S}^{500}$ & $\texttt{S}^{1000}$ \\
		\hline
		Size         &             136 &             526 &            1006 &        100 &        500 &        1000 \\ \hline		
		$\Err_h^{\train}$      &        8.48e-2 &        1.76e-3 &        3.77e-4 &   \,\,\,\,\,0e0 &   \,\,\,\,\,0e0 &    \,\,\,\,\,0e0 \,\,\\
		$\Err_{\bw}^{\train}$      &        7.14e-2 &        2.43e-3 &        5.87e-4 &   5.47e-5 &   8.78e-8 &    5.50e-9 \,\,\\
		$\Err_{\hat{\bsalpha}}^{\train}$  &        4.71e-1 &        4.07e-2 &        1.18e-2 &   9.40e-6 &   1.53e-8 &    9.61e-10 \\
		$\Err_h^{\test}$     &        4.12e-2 &        8.48e-4 &        1.82e-4 &   2.75e-5 &   4.42e-8 &    2.79e-9 \,\,\\
		$\Err_{\bw}^{\test}$     &        7.14e-2 &        2.42e-3 &        5.88e-4 &   1.80e-4 &   1.32e-6 &    1.64e-7 \,\,\\
		$\Err_{\hat{\bsalpha}}^{\test}$ &        4.77e-1 &        4.10e-2 &        1.20e-2 &   1.96e-5 &   1.47e-7 &    1.85e-8 \,\,\\
		\bottomrule
	\end{tabular}
	\caption{Training and test errors in terms of entropy function values ($h$), moments ($\bw$), and multipliers ($\hat{\bsalpha}$) for neural network and spline approximations of size around 100, 500, and 1000 in the case $N=1$.} 
	\label{table:constructed_summary_by_size_M1}
\end{table}

To further explore the relation between the test errors and numbers of parameters for the spline and neural network approximations, we have trained and tested more approximations of different sizes and reported their test errors in Figure~\ref{fig:M1_RMSE_by_Size}.
It can be observed from Figure~\ref{fig:M1_RMSE_by_Size} that (i) the spline approximation is indeed third-order accurate; (ii) the networks generally require substantially more parameters than the splines to reach the same level of accuracy; and (iii) the network depth is more important than its width in this test, i.e., for networks of similar size, deep but narrow networks outperform wide but shallow ones.
We also note that $\Err_{\bsalpha}$ is often orders of magnitude higher than $\Err_{h}$ and $\Err_{\bw}$. This is because $\hat{\bsalpha}$ becomes unbounded at the realizable boundary, which makes the approximation difficult.

\begin{figure}[h]
	\centering 
	\subfigure[Entropy function error $\Err_{h}^{\test}$]{\includegraphics[width = 0.28\textwidth]{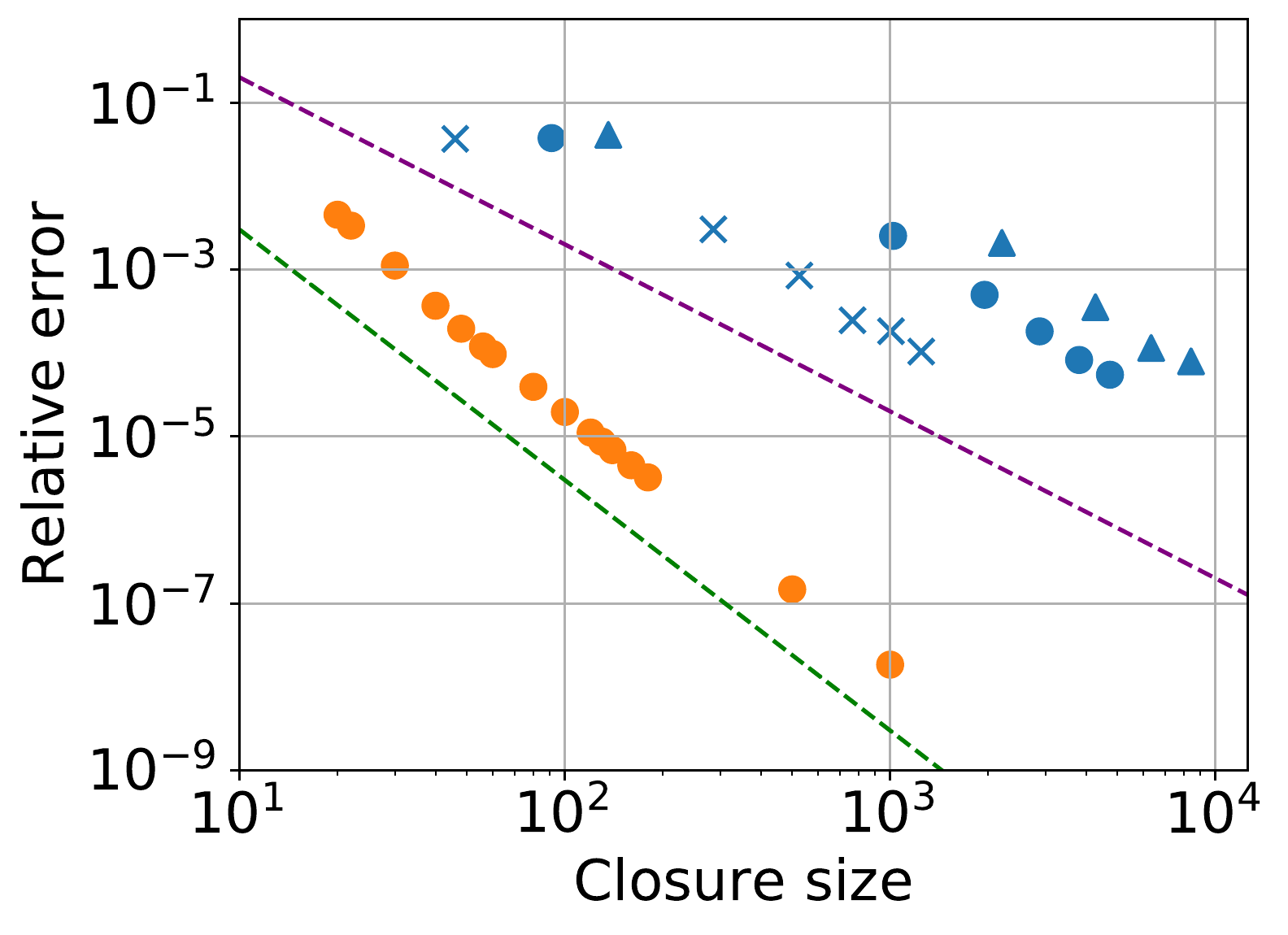}}~~
	\subfigure[Moment error $\Err_{\bw}^{\test}$]{\includegraphics[width= 0.28\textwidth]{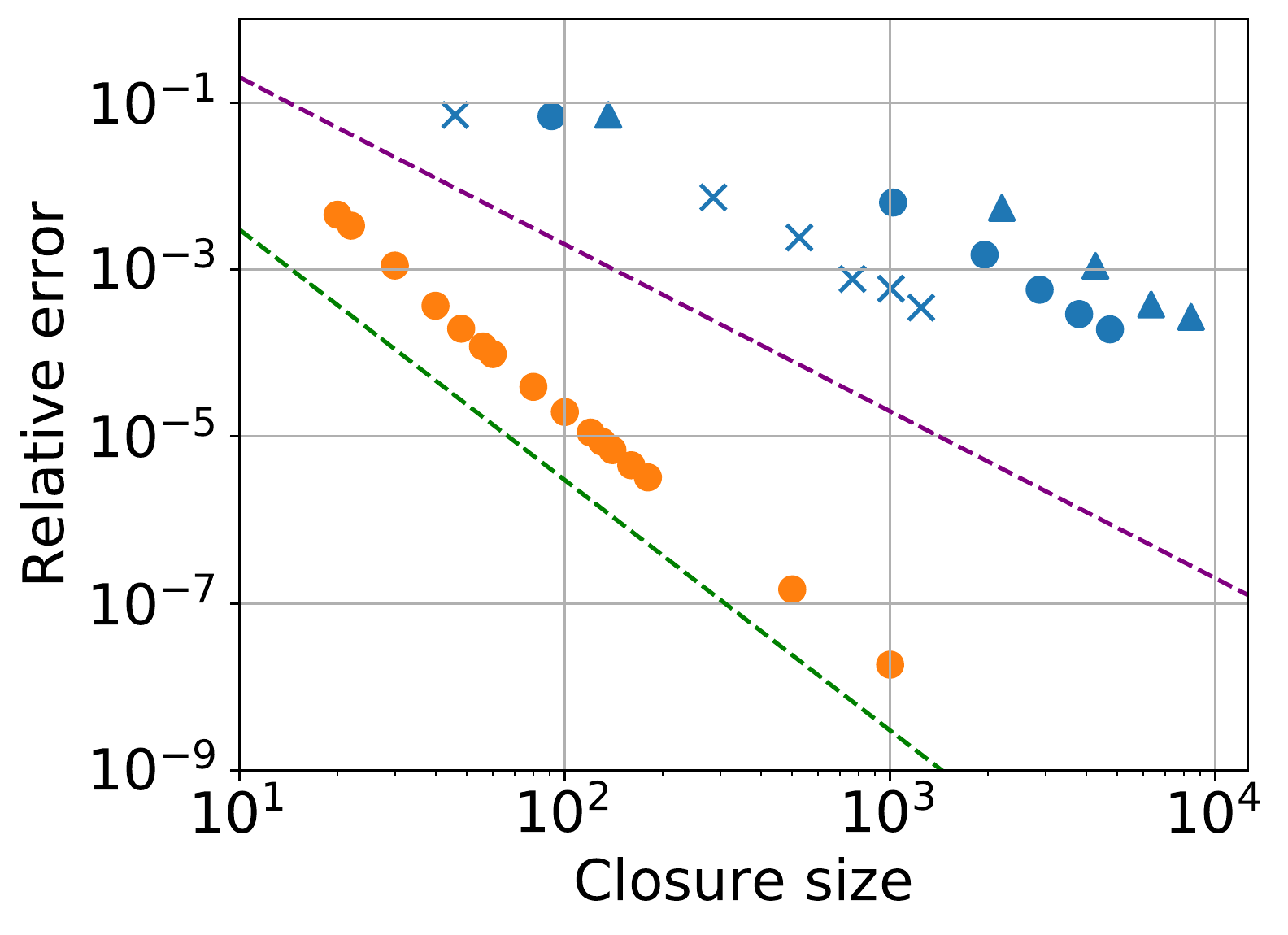}}~~
	\subfigure[Multiplier error $\Err_{\bsalpha}^{\test}$]{\includegraphics[width = 0.408\textwidth]{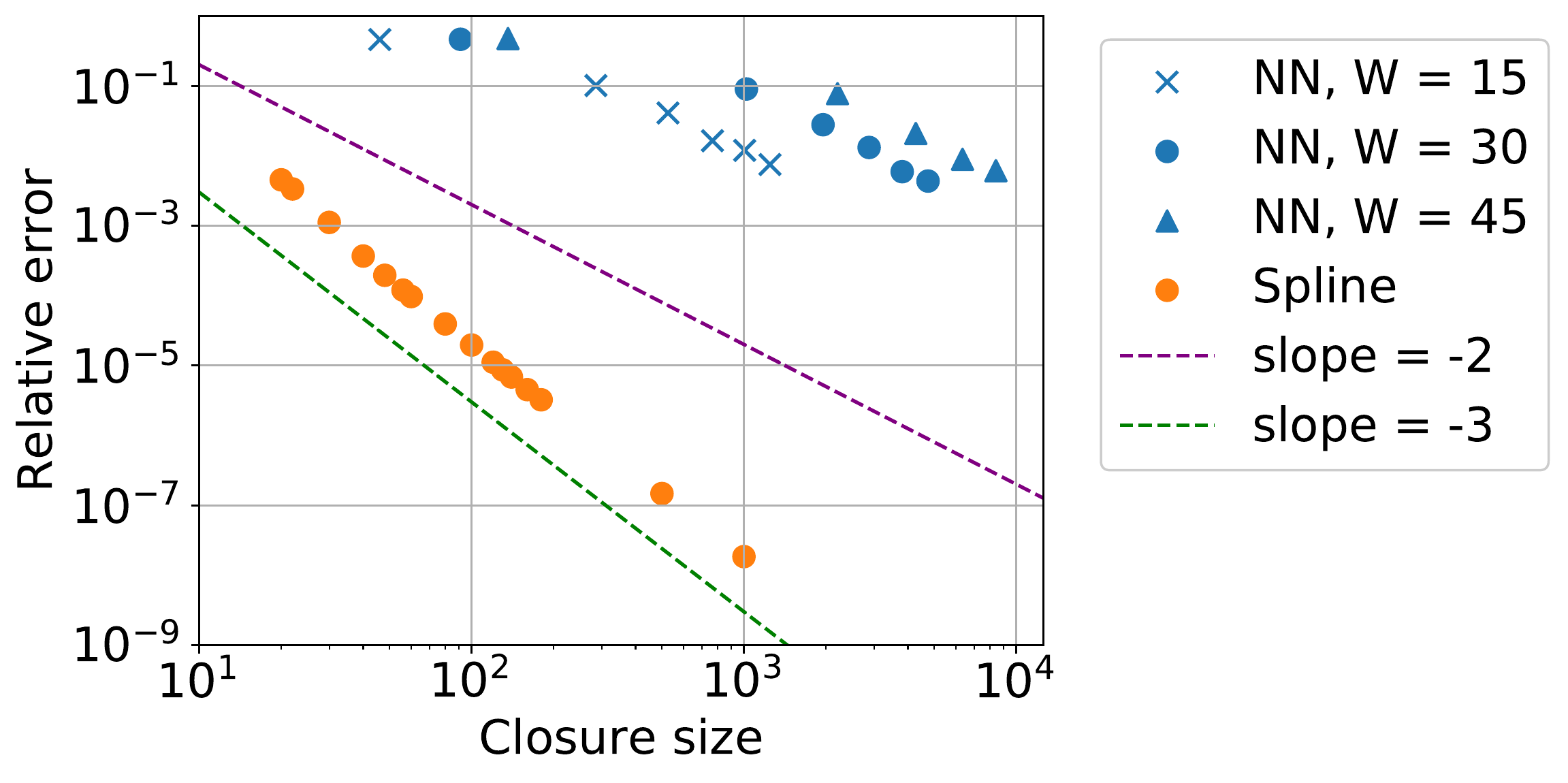}}
	\caption{Relative test errors for the constructed data-driven closures for $N = 1$. Neural network closures of various depths with width $W = $ 15, 30, and 45 are denoted as blue crosses, circles, and triangles. Spline closures of various sizes are denoted as orange circles.   }
	\label{fig:M1_RMSE_by_Size}
\end{figure}

Table~\ref{table:M2ErrorSummary} shows three neural network approximations for the case $N=2$ that have relative moment reconstruction (test) errors around $10^{-2}$, $10^{-3}$, and $10^{-4}$. Here the spline closures are not included since the convex spline approximation in Section~\ref{subsubsec:splines} is restricted to one dimension and the normalized realizable set $\tilde{\cR}_{\bfm}$ is in $\bbR^2$ when $N=2$.
The results in Table~\ref{table:M2ErrorSummary} indicate that more parameters are needed in the network approximations for $N=2$ to achieve similar levels of accuracy compared to the network approximations for $N=1$, which is as expected since the dimension of the domain of approximation functions grows as $N$ increases.
\begin{table}[h]
	\begin{center}
		\begin{tabular}{|l|c|c|c|}
			\toprule
			Closures & $\texttt{NN}^{1\times15}_2$ & $\texttt{NN}^{3\times30}_2$ & $\texttt{NN}^{4\times45}_2$ \\
			\hline
			Size         &         301 &        2911 &        8461 \\			\hline
			$\Err_h^{\train}$      &    1.89e-2 &    4.73e-4 &    3.24e-4 \\
			$\Err_{\bw}^{\train}$     &    1.74e-2 &    8.13e-4 &    5.67e-4 \\
			$\Err_{\bsalpha}^{\train}$  &    3.98e-1 &    2.25e-1 &    2.17e-1 \\
			$\Err_h^{\test}$     &    9.02e-3 &    2.34e-4 &    1.55e-4 \\
			$\Err_{\bw}^{\test}$     &    1.76e-2 &    8.08e-4 &    5.63e-4 \\
			$\Err_{\bsalpha}^{\test}$ &    4.05e-1 &    2.31e-1 &    2.21e-1 \\
			\bottomrule
		\end{tabular}
	\end{center}
 	\caption{Training and test errors in terms of entropy function values ($h$), moments ($\bw$), and multipliers ($\hat{\bsalpha}$) for neural network approximations in the case $N=2$ with testing moment errors ($\Err_{\bw}^{\test}$) around $10^{-2}$, $10^{-3}$, and $10^{-4}$.} 
	\label{table:M2ErrorSummary}
\end{table}

Figure~\ref{fig:M2NetSizeError} reports the test errors of more neural network approximations of various width and depth for $N=2$. 
The results are similar to the ones in Figure~\ref{fig:M1_RMSE_by_Size} for the $N=1$ case, and we also observe that, to improve the network accuracy, it is preferable to increase the network depth than the width.
\begin{figure}[h]
	\centering 
	\subfigure[Entropy function error $\Err_{h}^{\test}$]{\includegraphics[width= 0.285\textwidth]{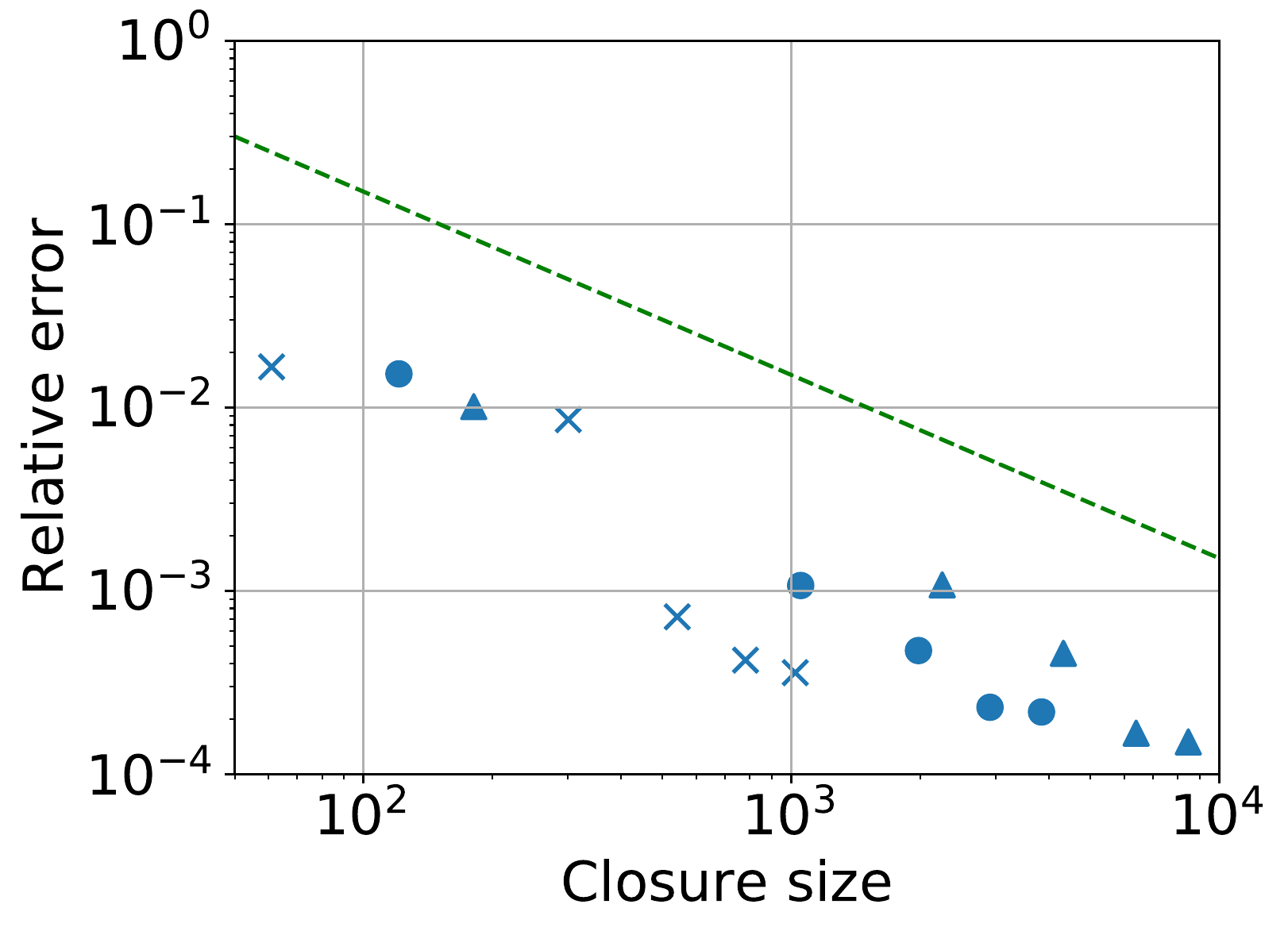}}~~
	\subfigure[Moment error $\Err_{\bw}^{\test}$]{\includegraphics[width = 0.285\textwidth]{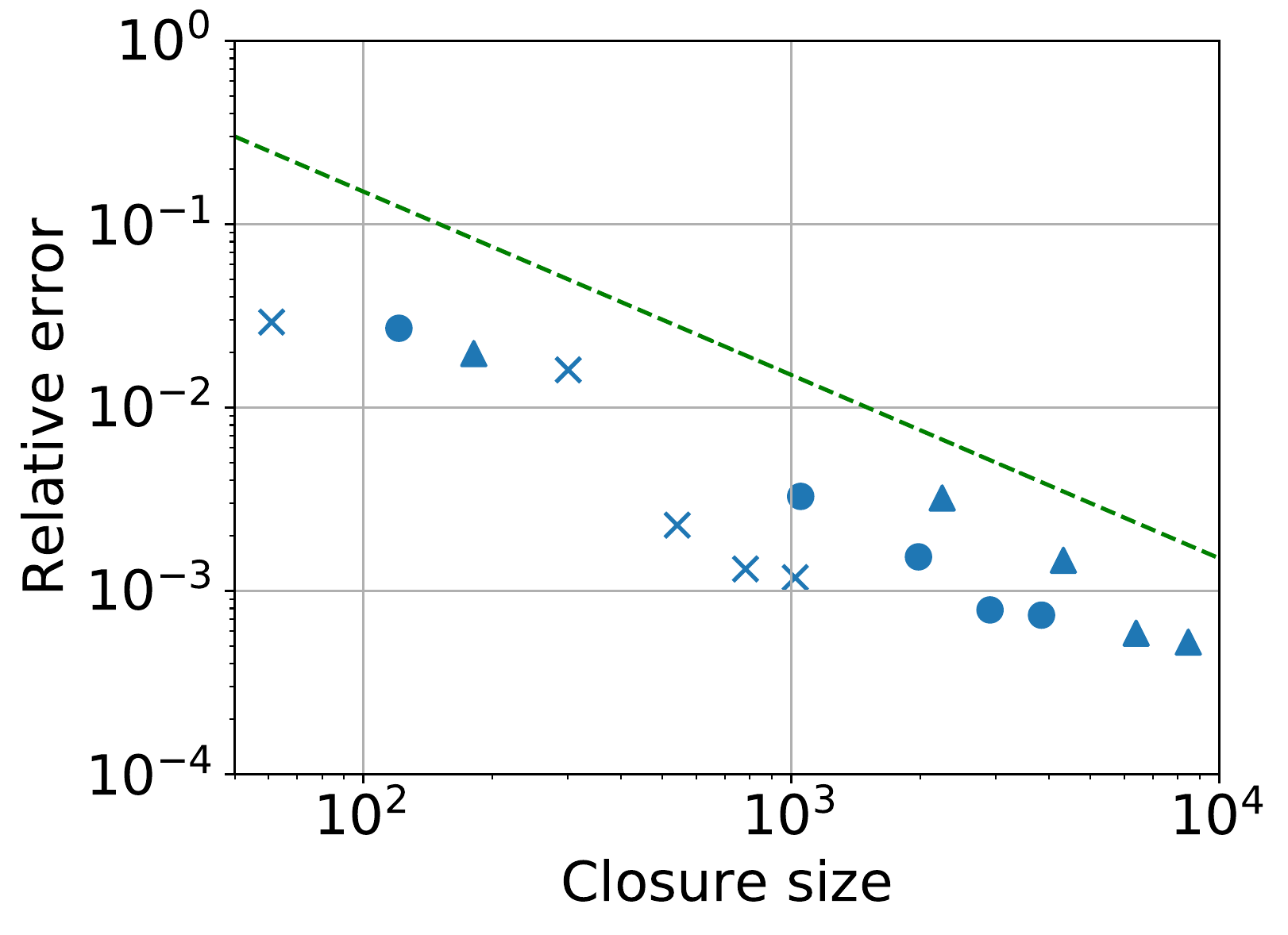}}~~
	\subfigure[Multiplier error $\Err_{\bsalpha}^{\test}$]{\includegraphics[width = 0.407\textwidth]{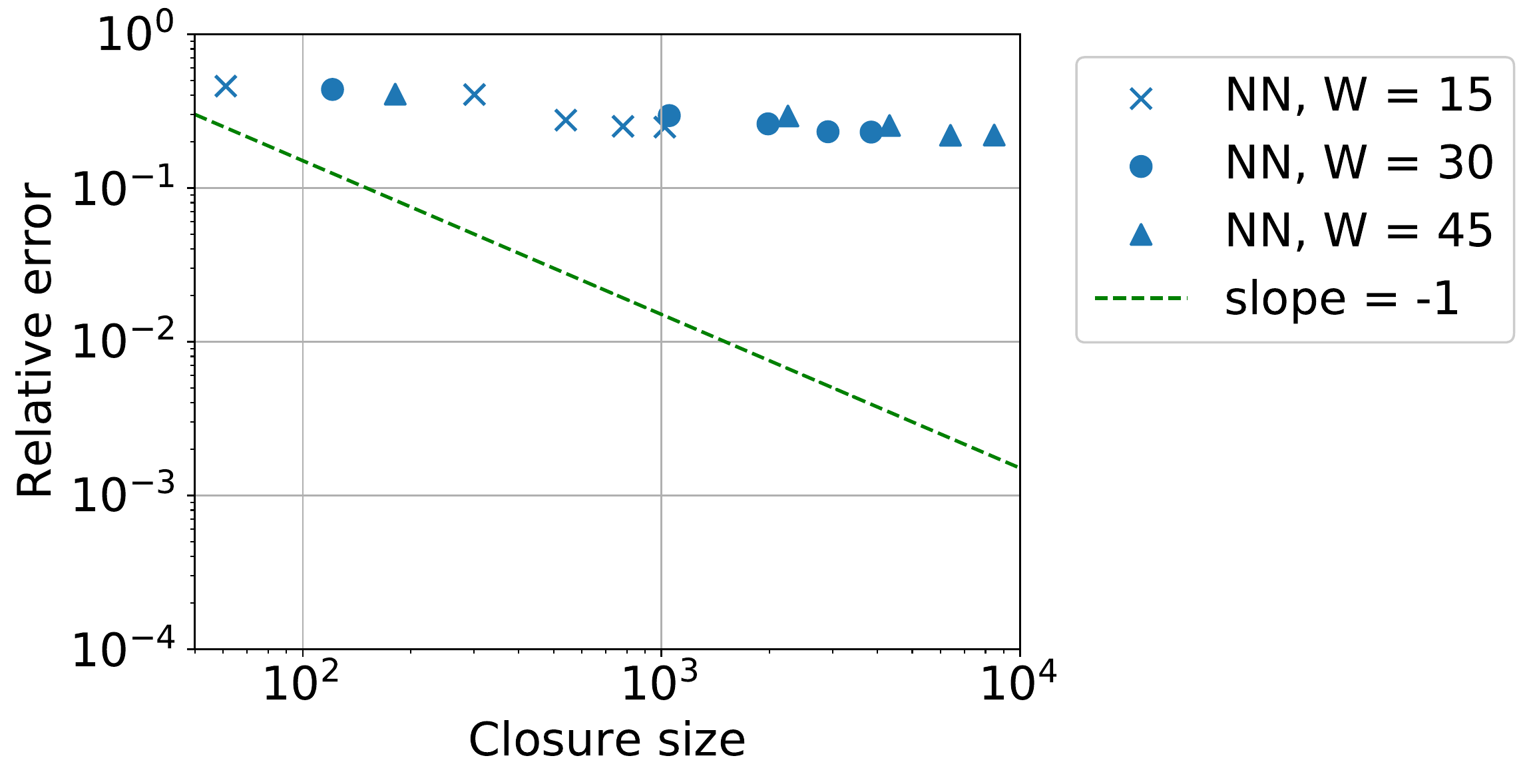}}
	\caption{Relative test errors for the constructed neural network closures for $N = 2$. Closures of various depths with width $W = $ 15, 30, and 45 are denoted as blue crosses, circles, and triangles.}
	\label{fig:M2NetSizeError}
\end{figure}

The data-driven closures considered in this section are all twice differentiable by construction. The spline approximations are guaranteed to be convex. On the other hand, the convexity of the network approximations is verified by checking the positive semidefiniteness of the Hessian matrix at each point of the excessively refined test datasets given in Section~\ref{subsubsec:data}. For both the $N=1$ and $N=2$ cases, we do not observe any violation of the convexity condition on the test datasets in the network approximations considered here.

\subsection{Computation time comparison}
\label{subsec:realizable_boundary_test}

In this section, we compare the computation time needed for the network and optimization approaches to map given moments $\bw$ to the associated multipliers $\hat{\bsalpha}(\bw)$. The network approach maps $\bw$ to $\hat{\bsalpha}(\bw)$ via a direct evaluation, whereas the optimization approach generates $\hat{\bsalpha}(\bw)$ by solving \eqref{eq:MB_optimization}. The optimization algorithm compared here is a Newton-based solver with backtracking line search considered in \cite{alldredge2012high}. We set the optimization tolerance to be $10^{-8}$ throughout the tests in this section. 
It is known \cite{alldredge2012high,alldredge2014adaptive,alldredge2019regularized} that, for moments close to the realizable boundary, the Hessian in \eqref{eq:MB_optimization} becomes ill-conditioned, which leads to more iteration counts in the optimization. 
Here we will confirm this result in the cases when the moment order $N=1$ and $2$, and verify that the cost of evaluating neural network approximations remains roughly constant.

We first compare the network and optimization approaches in the $N=1$ case on a set of moments $\{\bw^{(j)}\}_{j=1}^{200}$ with $\bw^{(j)}:=[1,w_1^{(j)}]$ and $w_1^{(j)}$ evenly distributed in $[0,1]$. 
Here we consider two variants of the optimization approach, $\optQ$ and $\optA$, where  $\optQ$ performs $\vint{\cdot}$, the angular integration in \eqref{eq:MB_optimization}, using a 30-point Gauss-Legendre quadrature, and $\optA$ uses integration based on an analytic formula, which is available for the special case $N=1$.
The network evaluation is performed using Keras function \texttt{model.predict()}.
Figure~\ref{fig:M1_boundaryline_test_iter} shows the iteration counts needed for the optimization algorithm to converge and Figure~\ref{fig:M1_boundaryline_time} shows the computation time for both the optimization and network evaluation. In these results, the computation is restricted to one CPU core, and the network is evaluated at one sample of moments at a time, i.e., \texttt{model.predict()} is called 200 times to evaluate the network at $\{\bw^{(j)}\}_{j=1}^{200}$, with a single point $\bw^{(j)}$ evaluated each time.
Figure~\ref{fig:M1_boundaryline_NNtime} reports the median of network evaluation time as well as the first and third quartile values over 20 trials.
The results in Figure~\ref{fig:M1_boundaryline_test} confirm that the iteration counts for both $\optQ$ and $\optA$ grow as the moments gets closer to the realizable boundary and the increased iteration counts are reflected in the computation time.
It can also be observed from the results that the single-point neural network evaluation is significantly slower than both variants of the optimization approach and that evaluating a smaller network ($\texttt{NN}^{0\times15}_1$) is only slightly faster than evaluating a larger one ($\texttt{NN}^{4\times45}_1$) at a single point. We suspect that the overhead incurred when calling the Keras function \texttt{model.predict()} may dominant the computation time in network evaluations. In order to minimize the overhead, we next use \texttt{model.predict()} to perform batch evaluation and compare the result to the optimization time in the next test.

\begin{figure}[h]
	\centering 
	\subfigure[Iteration count]{\includegraphics[width = 0.265\linewidth]{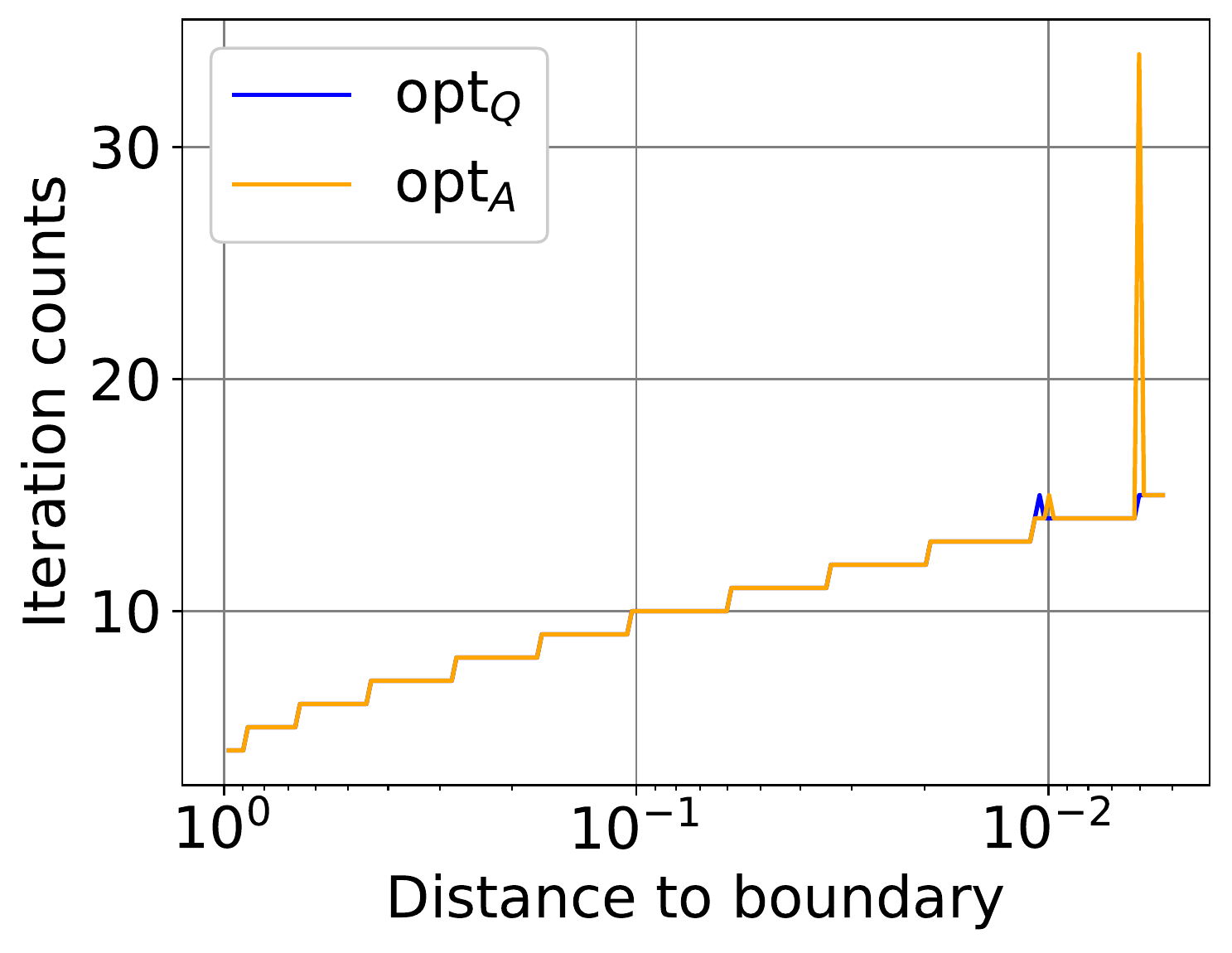} \label{fig:M1_boundaryline_test_iter}}
	\subfigure[Compuation time]{\includegraphics[width = 0.278\linewidth]{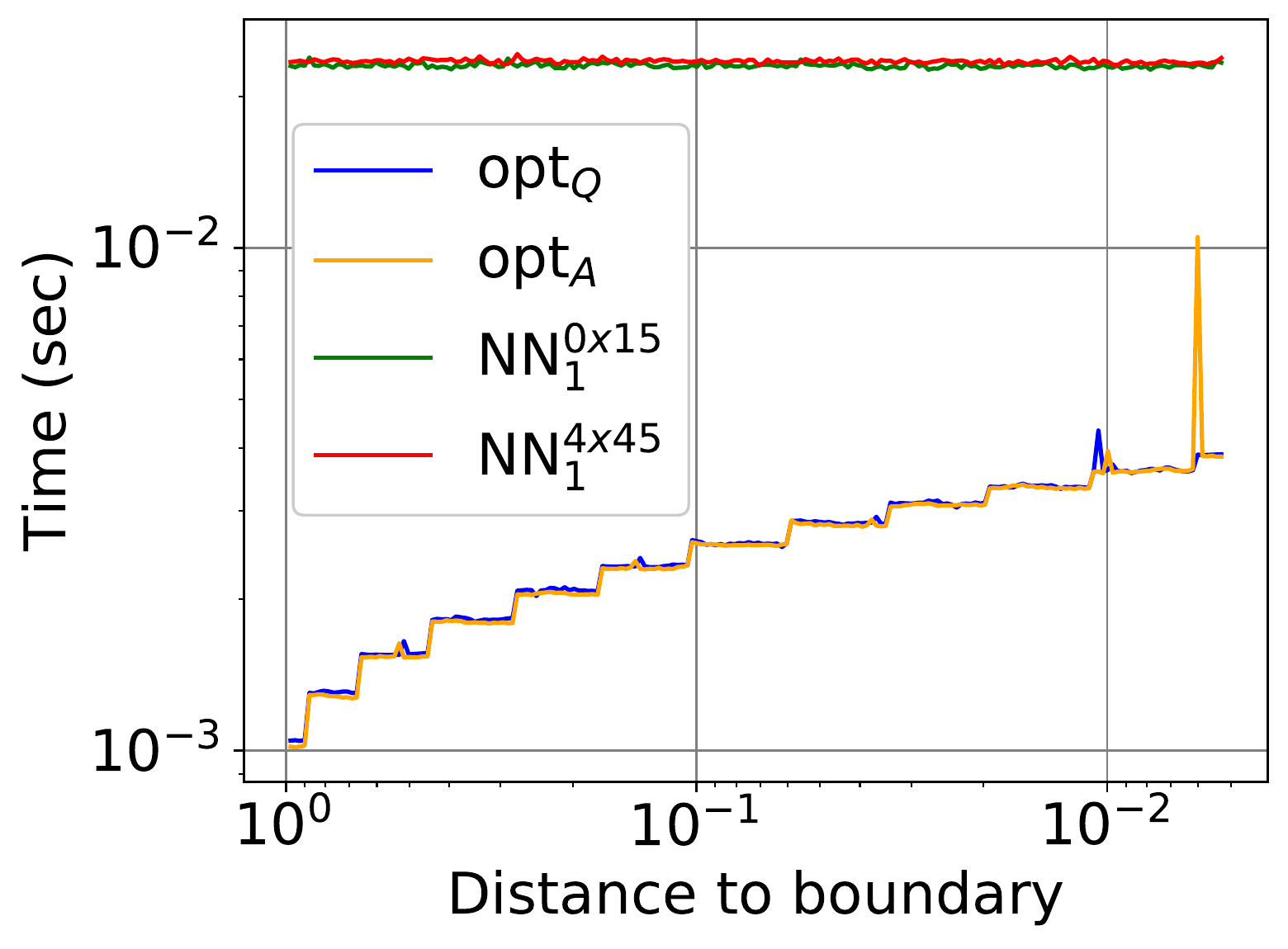} \label{fig:M1_boundaryline_time}}
	\subfigure[Compuation time (networks)]{\includegraphics[width = 0.41\linewidth]{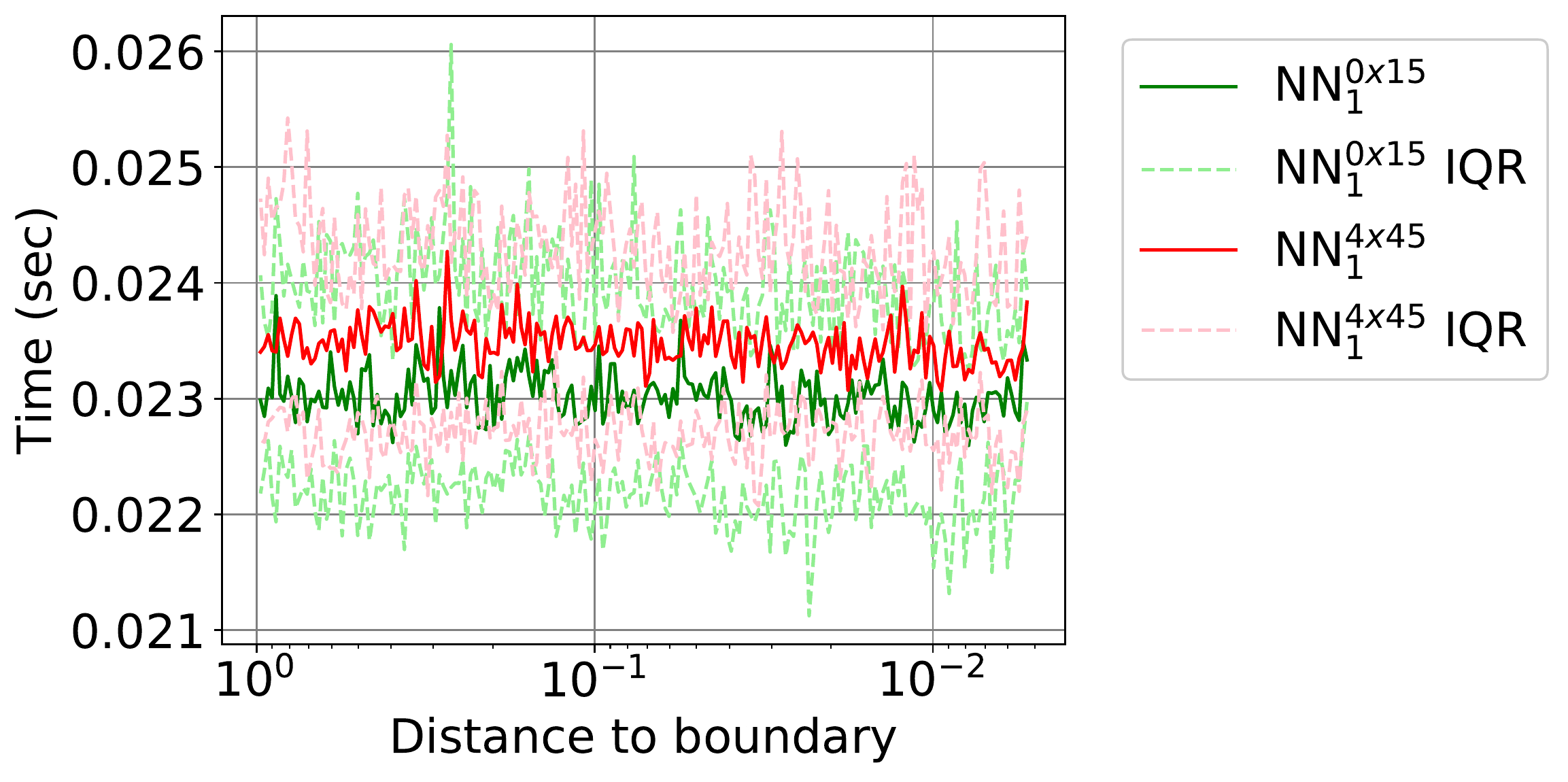} \label{fig:M1_boundaryline_NNtime}}
	\caption{Iteration counts and computation time comparison between two variants of the optimization approach and the network approximations for multiplier calculation with $N=1$. Here the calculations are performed on moments at various distance to the realizable boundary. Neural networks are evaluated point-wisely, which results in longer evaluation time. Figure~\ref{fig:M1_boundaryline_NNtime} shows a zoomed-in version of the medium network computation time for the small ($\texttt{NN}^{0\times15}_1$, green) and large ($\texttt{NN}^{4\times45}_1$, red) networks over 20 trials, where the curves in lighter color denote the upper and lower bounds of the interquartile range (IQR) of the measured time.}
	\label{fig:M1_boundaryline_test}
\end{figure}

Table~\ref{table:M1_boundary_line_test} reports the total time needed for the optimization and network approaches to compute multipliers for 100, 200, 500, 1,000, and 10,000 moments. These results are median values of 20 trails, in which the first and third quartiles are both within $8\%$ deviation from the median. Here the networks are evaluated by calling \texttt{model.predict()} once to evaluate all moments, i.e., batch evaluation. This reduces the overhead for calling \texttt{model.predict()} in network evaluation. The results in Table~\ref{table:M1_boundary_line_test} show that the batch network evaluation is about 6--60x faster than the optimization approach, and the advantage grows as the number of moments increases. This confirms the conjecture that the overhead in calling the Keras function \texttt{model.predict()} is indeed the dominant cost in network evaluation, especially when the number of moments to be evaluated is small.
We note that the results reported in Table~\ref{table:M1_boundary_line_test} are closer to realistic than the ones in Figure~\ref{fig:M1_boundaryline_test}, since batch evaluation is often allowed when solving moment systems with closures, e.g., at a given time step, one can collect moments from all spatial points and perform a batch evaluation to obtain all multipliers needed to evolve the time step.

\begin{table}[h]
	\centering
	\begin{tabular}{|r|c|c|c|c|}
		\toprule
		$\#$ of moments &  $\optQ$ & $\optA$ & $\texttt{NN}^{0\times15}_1$ & $\texttt{NN}^{4\times45}_1$ \\
		\midrule
	100    &      1.76e-1 &  1.54e-1 &  2.46e-2 &  2.51e-2 \\
	200    &      3.56e-1 &  3.08e-1 &  2.74e-2 &  2.93e-2 \\
	500    &      9.13e-1 &  7.98e-1 &  3.21e-2 &  3.94e-2 \\
	1000   &      1.89e0\,\, &  1.71e0\,\, &  4.26e-2 &  5.19e-2 \\
	10000  &      1.99e1\,\, &  1.87e1\,\, &  2.24e-1 &  3.11e-1 \\
	\bottomrule
	\end{tabular}
	\caption{Comparison between the computation time for the optimization approach with quadrature $(\optQ)$ and analytic $(\optA)$ integrals and the batch evaluation time for the small ($\texttt{NN}^{0\times15}_1$) and large ($\texttt{NN}^{4\times45}_1$) networks in the $N=1$ case. The time (sec) for evaluating 100, 200, 500, 1000, and 10000 moments are reported. }
	\label{table:M1_boundary_line_test}
\end{table}

Next, we repeat the comparison between the optimization and neural network approaches in the case $N=2$.
Here we test both approaches with moments moving towards the boundary of the normalized realizable set $\tilde{\cR}$ in four different directions, as shown in Figure~\ref{fig:M2realizable}.
The tested moments are 200 evenly distributed points between the origin and the boundary point in each direction.
The iteration count for $\optQ$ is reported in Figure~\ref{fig:M2realizable_iter} along each direction.
Although analytic integration formula exists for the angular integrations in \eqref{eq:MB_optimization} when $N=2$, we do not include the $\optA$ into the comparison here since there is no substantial difference between $\optQ$ and $\optA$ when $N=1$.
The $\optQ$ optimization time and the network single-point evaluation time are compared in Figure~\ref{fig:M2realizable_time}, where we observe higher optimization time and nearly constant network evaluation time for moments near the realizable boundary. However, as in the $N=1$ case, the single evaluation time for the networks is still higher than the optimization time.
Table~\ref{table:M2_boundary_line_test} reports the total time for computing the multiplier $\hat{\bsalpha}$ from various numbers of given moments, averaged over the four test directions.
As in the results in Table~\ref{table:M1_boundary_line_test}, the network evaluations here are performed in batches, i.e., \texttt{model.predict()} is only called once to evaluate all moments. 
These results are median values of 20 trails, in which the first and third quartiles are both within $9\%$ deviation of the median. 
We observe that, for both the small ($\texttt{NN}^{0\times15}_2$) and large ($\texttt{NN}^{4\times45}_2$) networks, the batch evaluation time is 7--62x faster than the optimization, which is slightly higher than the speedup observed in Table~\ref{table:M1_boundary_line_test} for the $N=1$ case.
The results again confirm that the overhead of calling \texttt{model.predict()} dominants the evaluation cost when evaluating smaller number of moments.
We also expect the speedup of using the neural network closures to become more significant as the moment order $N$ goes higher, in which case the optimization time increases drastically while the network evaluation time is expected to remain in the same order of magnitude.

\begin{figure}[h]
	\centering
	\subfigure[Realizable set $\tilde{\cR}_{\bfm}$]{\includegraphics [width = 0.28\linewidth]{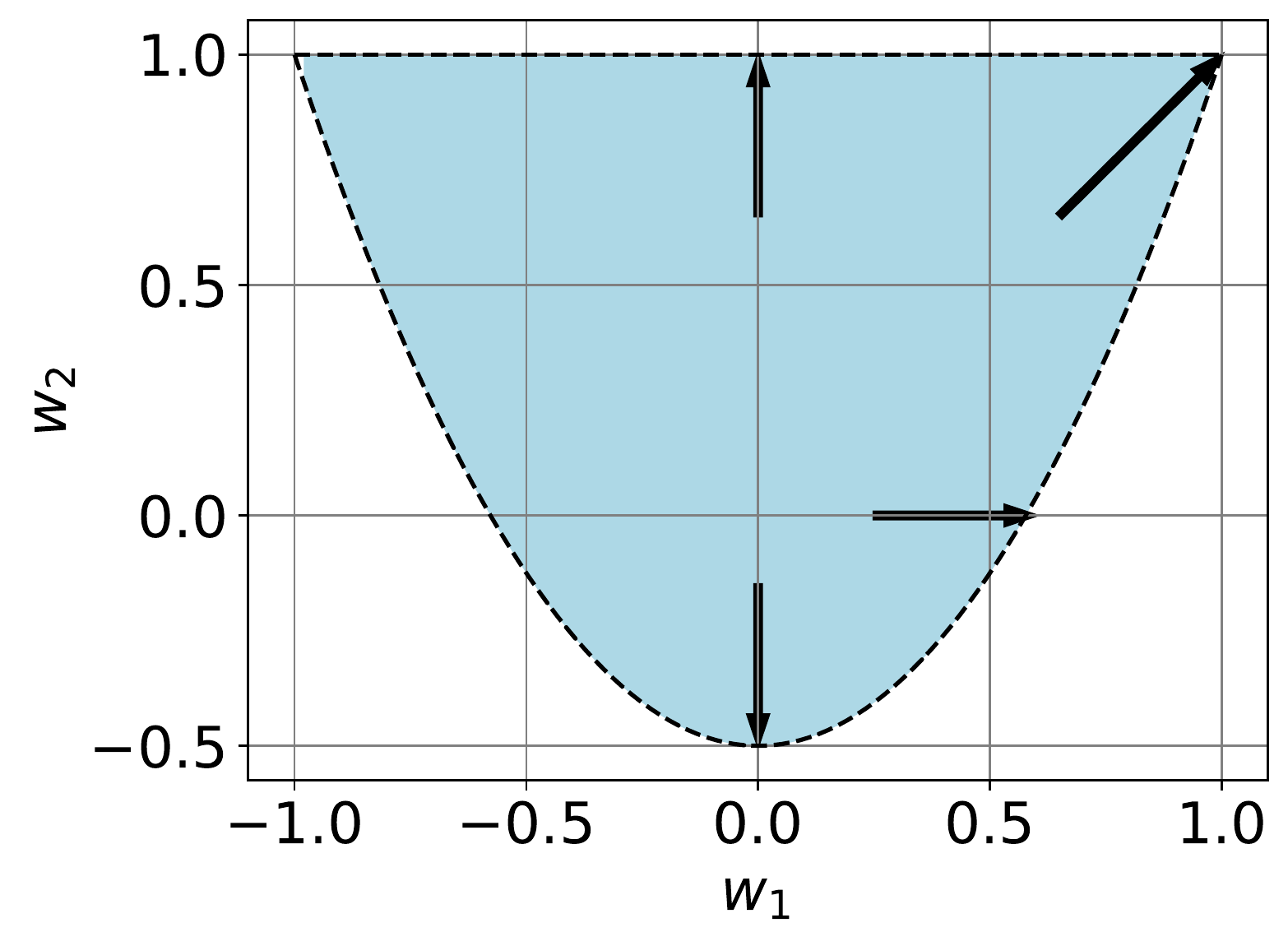} \label{fig:M2realizable}}
	\subfigure[Iteration count]{\includegraphics[width=0.265\linewidth]{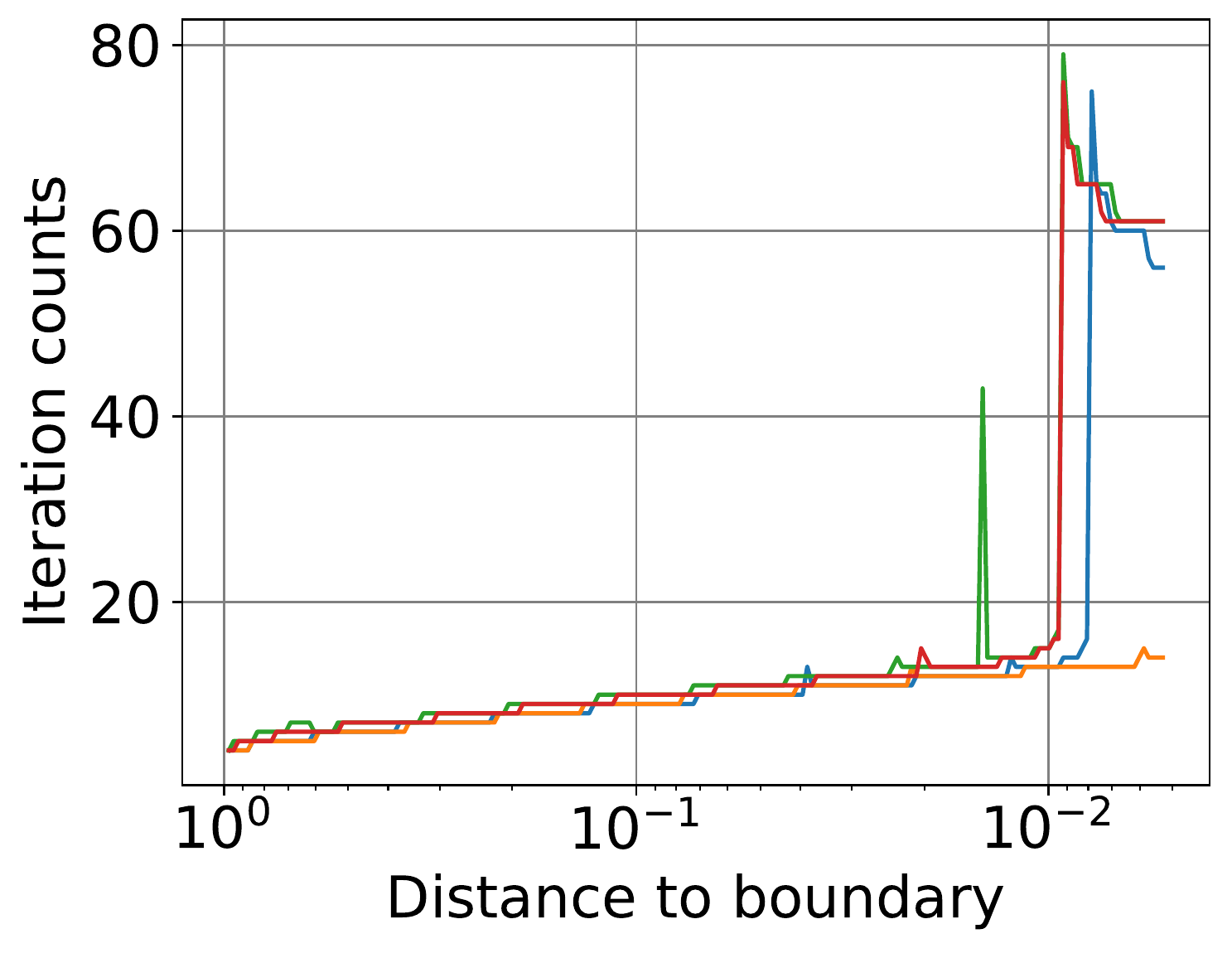} \label{fig:M2realizable_iter}}
	\subfigure[Computation time]{\includegraphics[width=.41\linewidth]{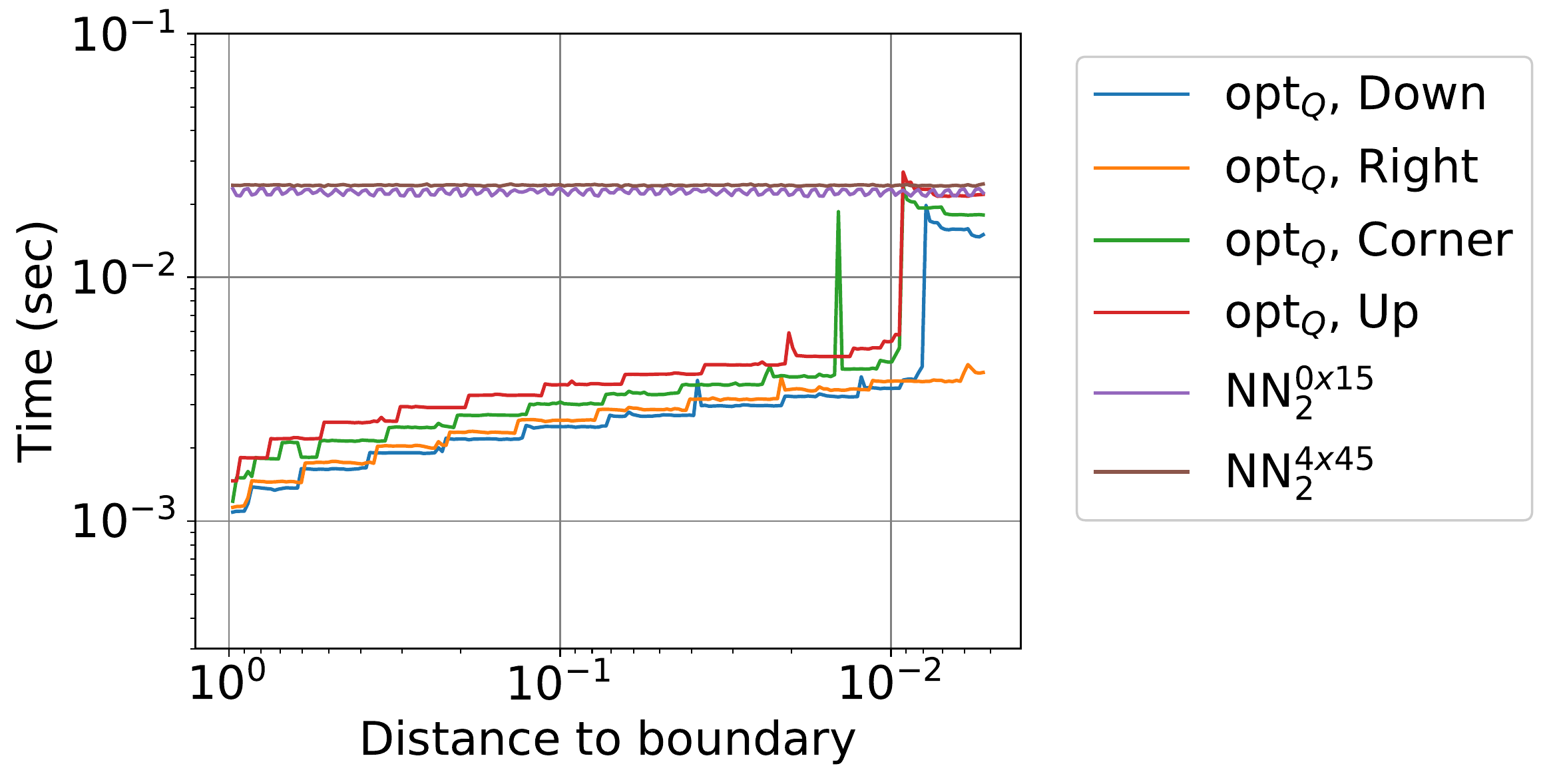} \label{fig:M2realizable_time} }
	\caption{Iteration counts and computation time comparison between the optimization approach ($\optQ$) and two neural network approximations ($\texttt{NN}^{0\times15}_2$ and $\texttt{NN}^{4\times45}_2$)  for multiplier calculation with $N=2$. The calculations are performed along four directions towards the realizable boundary as shown in Figure~\ref{fig:M2realizable}. 
		Neural networks are evaluated point-wisely, which results in longer evaluation time.}
	\label{fig:M2_boundaryline_test}
\end{figure}

\begin{table}[h]
	\centering 
 \begin{tabular}{|r|c|c|c|}
	\toprule
	 $\#$ of moments &    $\optQ$ & $\texttt{NN}^{0\times15}_2$ & $\texttt{NN}^{4\times45}_2$ \\
	\midrule
       100 & 2.18e-1 &      2.84e-2 &      3.08e-2 \\
       200 & 4.25e-1 &      3.01e-2 &      3.46e-2 \\
       500 & 1.07e0\,\, &      3.58e-2 &      4.50e-2 \\
      1000 & 2.15e0\,\, &      4.68e-2 &      6.11e-2 \\
     10000 & 2.20e1\,\, &      2.32e-1 &      3.55e-1 \\
	\bottomrule
\end{tabular}
	\caption{Comparison between the computation time for the optimization approach with quadrature $(\optQ)$ and analytic $(\optA)$ integrals and the batch evaluation time for the small ($\texttt{NN}^{0\times15}_2$) and large ($\texttt{NN}^{4\times45}_2$) networks in the $N=2$ case. The time (sec) for evaluating 100, 200, 500, 1000, and 10000 moments are reported. }
	\label{table:M2_boundary_line_test}
\end{table}

\subsection{Plane source problem}
\label{subsec:planesource}

In this section, the data-driven closures are applied to solve a benchmark problem: the plane source problem.
This problem models the propagation of neutral particles in a purely scattering medium from initial time $t_0=0$ to final time $t_{\textup{final}}$. 
The kinetic model is given in \eqref{eq:kinetic_eqn_slab}, and we consider $t_{\textup{final}}=1$ with a constant scattering crosssection $\sigma_{\text{s}} = 1$.
The initial particle distribution is given by
\begin{equation}\label{eq:initial_condition}
f(t_0=0,x,\mu) = 0.5\delta(x)+f_{\textup{floor}},
\end{equation}
where $\delta$ is the Dirac-delta function and $f_{\textup{floor}}=10^{-8}$ is the floor value for the distribution for safeguarding.
To minimize the effect of the boundaries, the computation domain in $(x,\mu)$ is chosen to be $[x_L,x_R]\times[-1,1]$, where $x_L = -(t_{\textup{final}} +0.1)$ and $x_R = t_{\textup{final}} +0.1$ with boundary conditions
\begin{equation}
f(t,x_L,\mu) = f_{\textup{floor}}\quand f(t,x_R,\mu) = f_{\textup{floor}}.
\end{equation}
Here we solve the corresponding moment equation \eqref{eq:moment_eqn_slab} with moment order $N=1$ and $2$ using Maxwell-Boltzmann entropy-based closure (M$_N$), the P$_N$ closure \cite{Pomraning-1973,Case-Zweifel-1967,Lewis-Miller-1984}, and two data-driven closures --- the spline and neural network closures.
The plane source problem is known to be difficult for the closures since the discontinuous initial distribution leads to moments close to realizable boundary during the evolution. 
We use it to demonstrate that the data-driven closures can serve as efficient approximations to the expensive, optimization-based M$_N$ closure.

The numerical scheme used to solve the moment system is the realizability-preserving kinetic scheme developed in \cite{alldredge2012high} based on earlier work in \cite{Deshpande-1986,Harten-Lax-VanLeer-1983,Perthame-1990,Perthame-1992}.
This scheme discretizes \eqref{eq:moment_eqn_slab} using a second-order finite volume method in space and second-order strong-stability-preserving Runge-Kutta (SSP-RK2) method in time.
In the following tests, we solve \eqref{eq:moment_eqn_slab} on a uniform spatial grid of $100$ cells, and the boundary condition is implemented using two ghost cells on each side of the boundary.
The time step is given by $\dt = 0.95(\frac{2}{2+\theta}) \dx$, where $\theta$ is a parameter in the minmod limiter used in the numerical scheme. We use $\theta=2$ throughout the tests.

\paragraph{Closure comparison for $N=1$.}
Figure~\ref{fig:planesourceN1} shows the solutions to the plane source problem at time  given by the tested closures with moment order $N=1$.
The M$_1$ closure is implemented using both the $\optQ$ and $\optA$ approaches discussed in Section~\ref{subsec:realizable_boundary_test}, where the a 10-point Gauss-Legendre quadrature rule is used in $\optQ$.
We show the solutions from these two approaches in the same plot as they are indistinguishable. 
The solutions of spline closures are shown in Figure~\ref{fig:splineN1}, where we use the coarser ($\texttt{S}^{30}$) and finer ($\texttt{S}^{130}$) spline approximations reported in Table~\ref{table:constructed_summary_by_error_M1}.
The results of the network closures are plotted in Figure~\ref{fig:networkN1}, where the smaller ($\texttt{NN}^{1\times15}_{1}$) and larger ($\texttt{NN}^{5\times30}_{1}$) networks in Table~\ref{table:constructed_summary_by_error_M1} are tested. To preserve the symmetry of the solution, the symmetric network approximation given in \eqref{eq:symmetry} is used here.
From these results, we confirm that, with sufficiently accurate approximations, the data-driven closures can provide quality solutions similar to the ones from the more expensive M$_1$ closures.

\begin{figure}[h]
	\centering 
	\subfigure[M$_1$, $\optQ$/$\optA$]{\includegraphics[width=.37\linewidth]{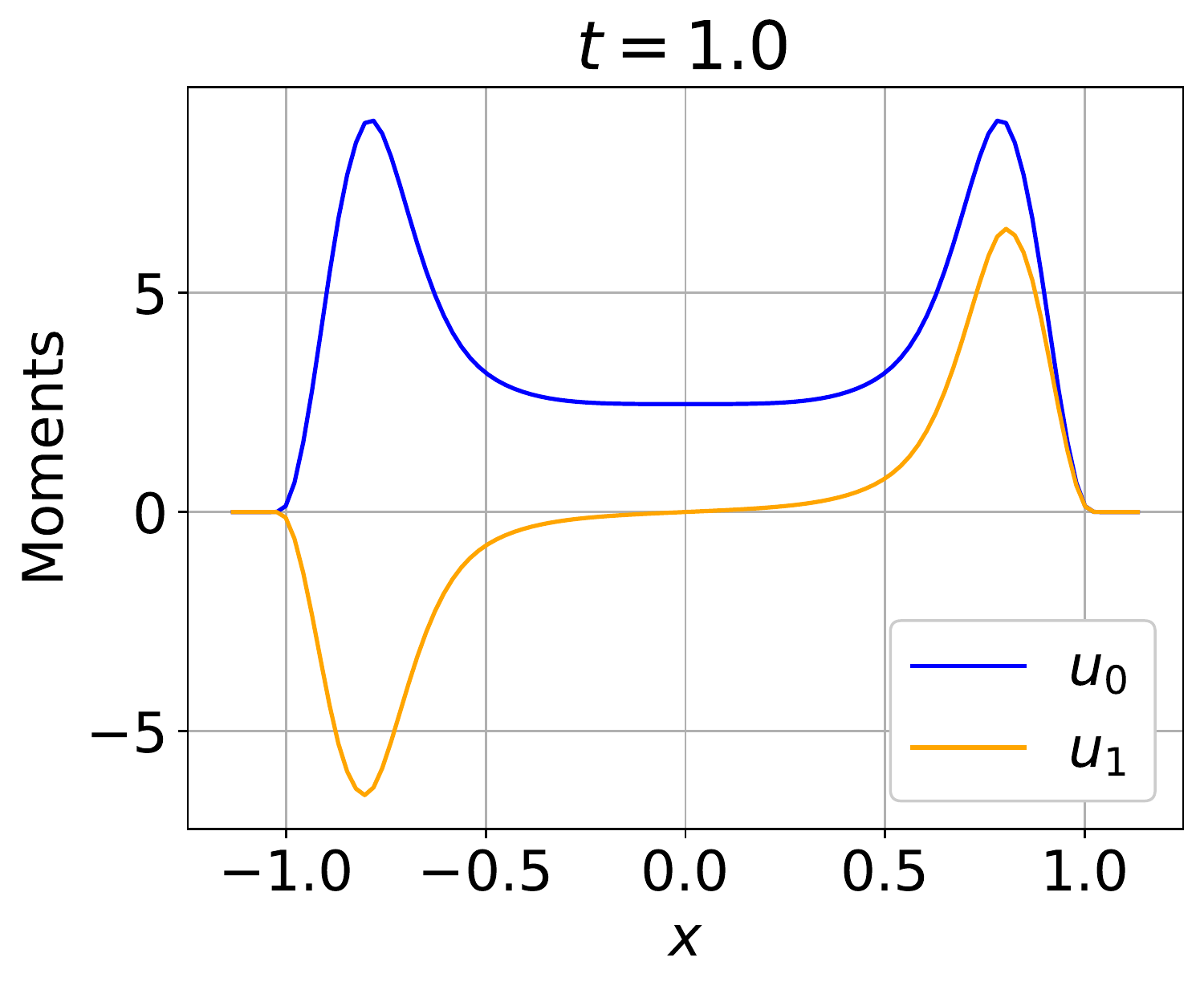}}~~~
	\addtocounter{subfigure}{1}
	\subfigure[Spline closures]{\includegraphics[width= 0.52\linewidth]{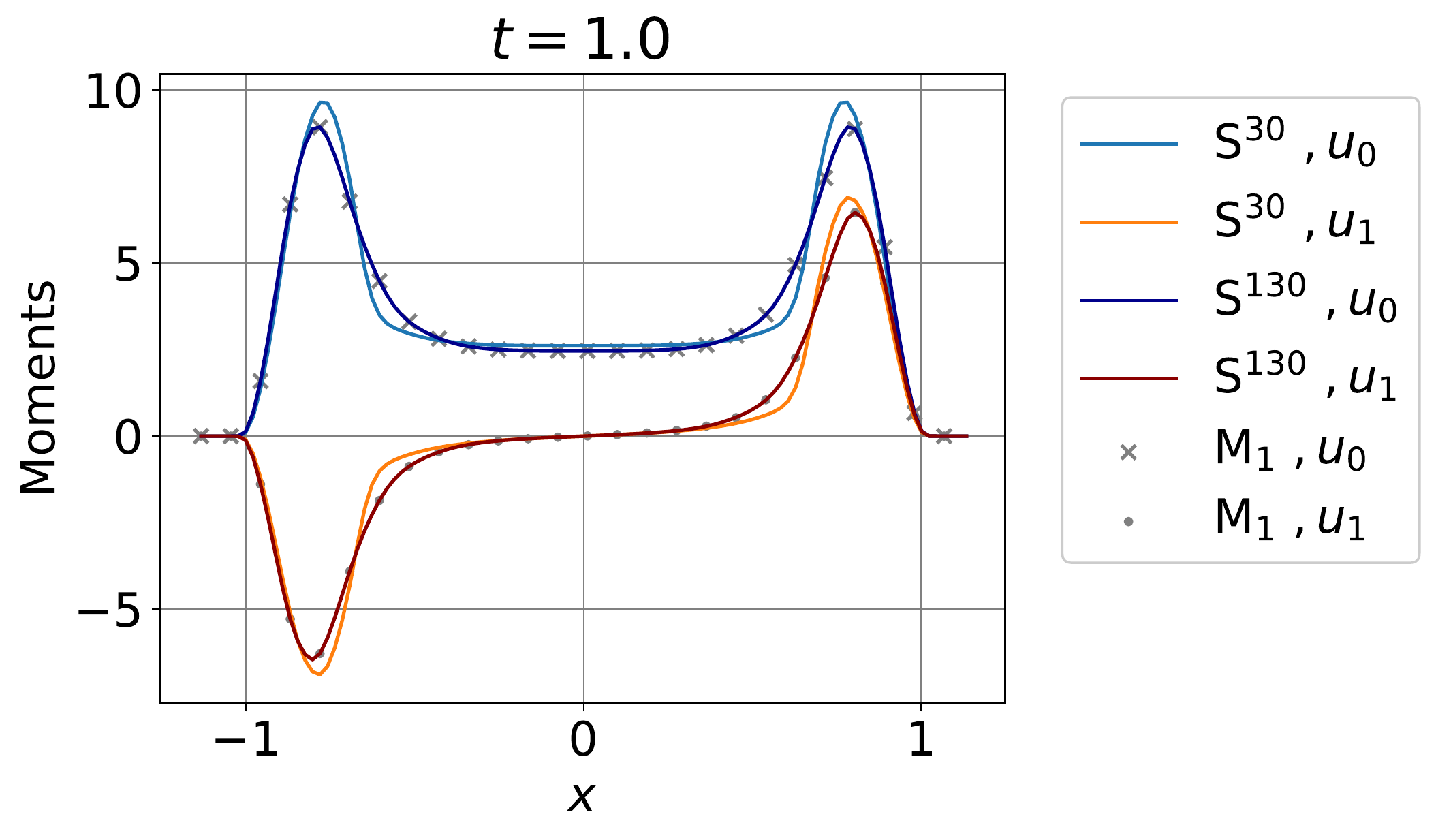} \label{fig:splineN1} }	~~~~~~\\
	\addtocounter{subfigure}{-2}
	\subfigure[P$_1$]{\includegraphics[width=.38\linewidth]{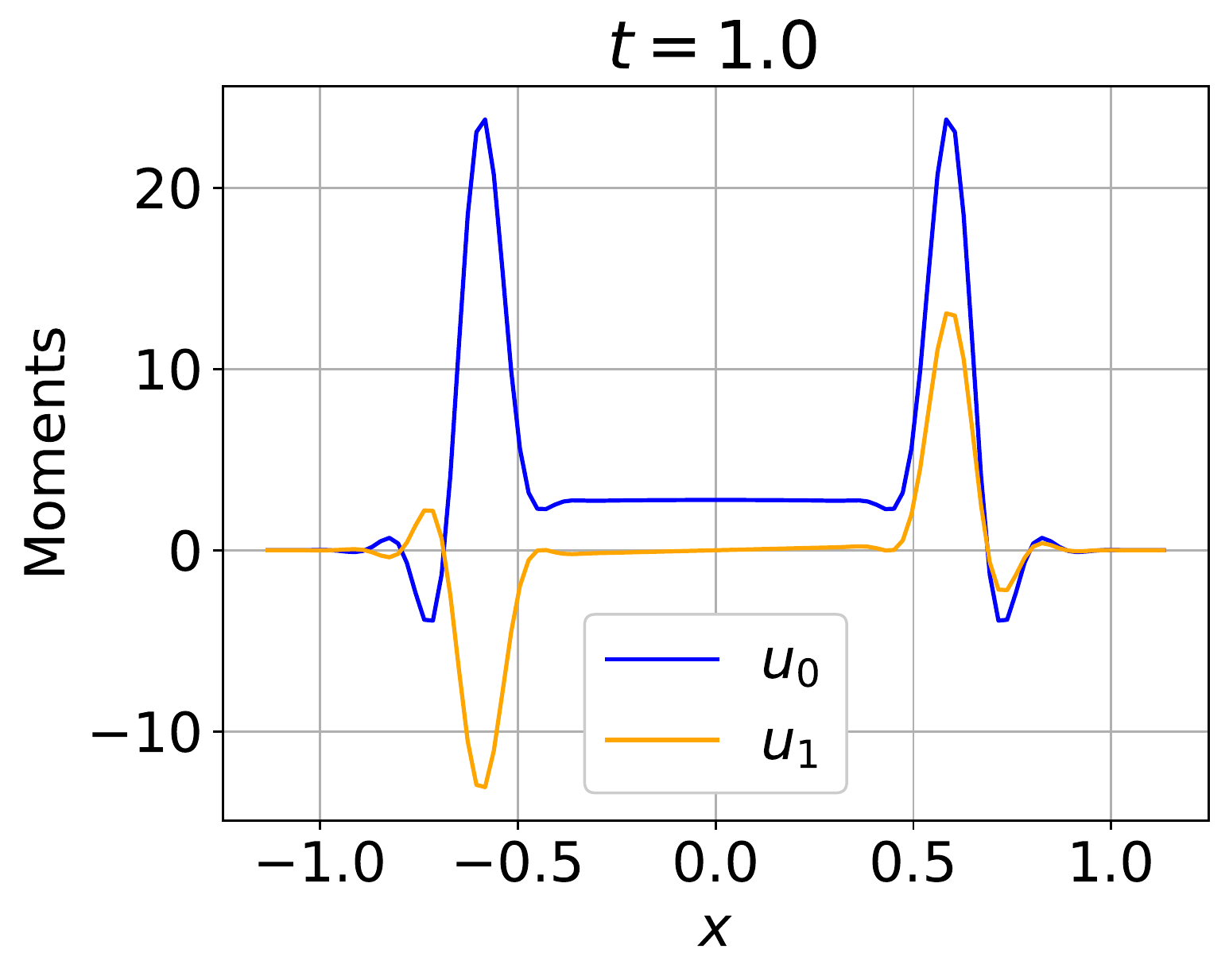}}	~~~
	\addtocounter{subfigure}{1}
	\subfigure[Symmetric networks closures]{\includegraphics[width= 0.56\linewidth]{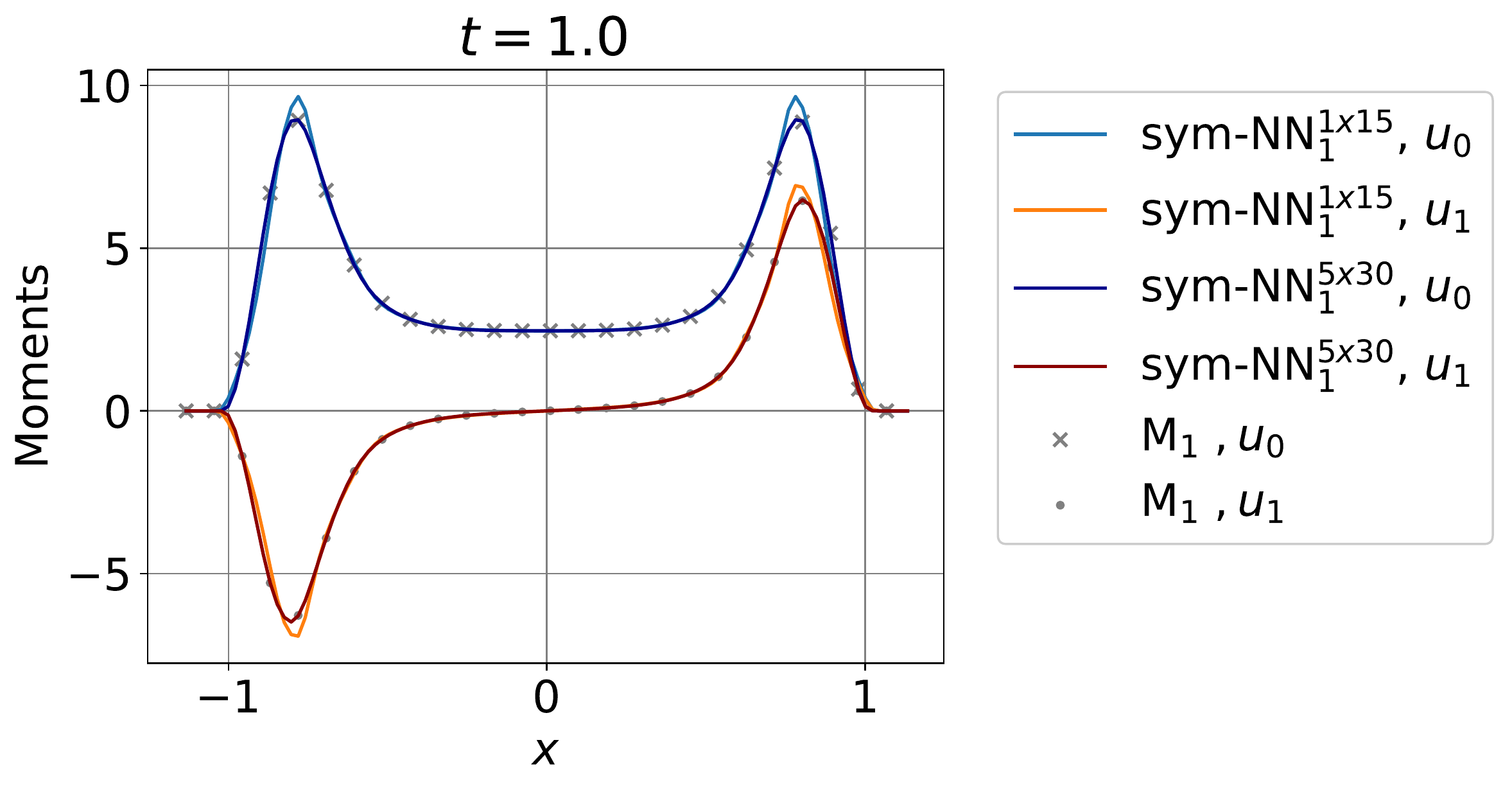} \label{fig:networkN1} }
	\caption{Solutions to the plane source problem at $t_{\text{final}} = 1$ using various moment closures with $N=1$.  Figures~\ref{fig:splineN1} and \ref{fig:networkN1} show that the solutions from the larger spline ($\texttt{S}^{130}$) and network ($\texttt{sym-NN}^{5\times30}_1$) are nearly identical to the reference M$_1$ solution. }
	\label{fig:planesourceN1}
\end{figure}

Table~\ref{table:M1_plane_test_table} reports the computation time for the tested closures and the relative error of the solutions from data-driven closures at the final time $t_{\textup{final}}=1$.
Here we define the error as
\begin{equation}\label{eq:sol_error}
\Err_{\bu}:=\frac{\|\bu_{\textup{a}}(t_{\textup{final}},\cdot) - \bu_{\text{ref}}\|_{L^2([x_L,x_R])}}{\|\bu_{\text{ref}}\|_{L^2([x_L,x_R])}},\quad\text{with norm }
\|\bw\|_{L^2([x_L,x_R])}:=\big(\int_{x_L}^{x_R} \|\bw\|_2^2 \, dx\big)^{\frac{1}{2}}\:,
\end{equation}
where the reference solution $\bu_{\text{ref}}$ is chosen to be the M$_N$ ($\optQ$) solution at final time, since the data-driven closures aim to approximate the M$_N$ closure.
The computation time for the P$_1$ closure serves as a baseline in the following comparison, since the cost of computing the P$_1$ closure is minimal.
We observe from Table~\ref{table:M1_plane_test_table} that, for both the spline and network closures, increasing the number of parameters improves the solution accuracy with minimal effect in the computation time. When using the finer spline ($\texttt{S}^{130}$) and larger network ($\texttt{NN}^{5\times30}_{1}$) closures, the errors are of order $10^{-3}$.
The fast evaluation of the spline closures results in a 183x speedup comparing to the M$_1$ closure. Despite the slower evaluation of the symmetric network closures due to the larger number of parameters and the overhead for calling the \texttt{model.predict()} Keras function, the symmetric network closures is still 2.6x faster than the M$_1$ closure.

\begin{table}[h]
\centering 
\begin{tabular}{|c|c|c|c|c|c|c|c|}
	\toprule
	Closure &    M$_1$ $\optQ$ & M$_1$ $\optA$ &   P$_1$ & $\texttt{sym-NN}^{1\times15}_1$ &  $\texttt{sym-NN}^{5\times30}_1$ & $\texttt{S}^{30}$ & $\texttt{S}^{130}$ \\
	\midrule
	Time (sec) & 1.66e1 & 1.62e1 & 2.15e-2 &            6.09e0\,\, &            6.24e0\,\, &  8.71e-2 &   8.86e-2 \\
	$\Err_{\bu}$      & --- & --- & --- &            6.33e-2 &            2.85e-3 &  1.07e-1 &   1.49e-3 \\
	\bottomrule
\end{tabular}
\caption{Computation time (sec) and relative errors in the plane source test using various closures with $N=1$. The error $\Err_{\bu}$ (see \eqref{eq:sol_error} for definition) measures the relative $L^2$ error in the final time solutions from data-driven closures with respect to the one from the M$_1$ $\optQ$ closure. } 
\label{table:M1_plane_test_table}
\end{table}

\paragraph{Closure comparison for $N=2$.}
We then test the closures on the plane source problem for moment order $N=2$. The solutions for the tested closures are plotted in Figure~\ref{fig:planesourceN2} and the computation time and relative errors are reported in Table~\ref{table:M2_error}.
Here we only compare the M$_2$ ($\optQ$), P$_2$, and the symmetric network closures, since the spline approximation in Section~\ref{subsubsec:splines} is limited to one dimension. 
Figure~\ref{fig:planesourceN2} confirms that the solutions from symmetric network closures successfully capture the qualitative behavior of the M$_2$ solution. 
Using the larger network ($\texttt{sym-NN}^{4\times45}_2$), the network closure solution is nearly identical to the reference M$_2$ solution.

\begin{figure}[h]
	\centering
	\subfigure[M$_2$, $\optQ$]{\includegraphics[width=.458\linewidth]{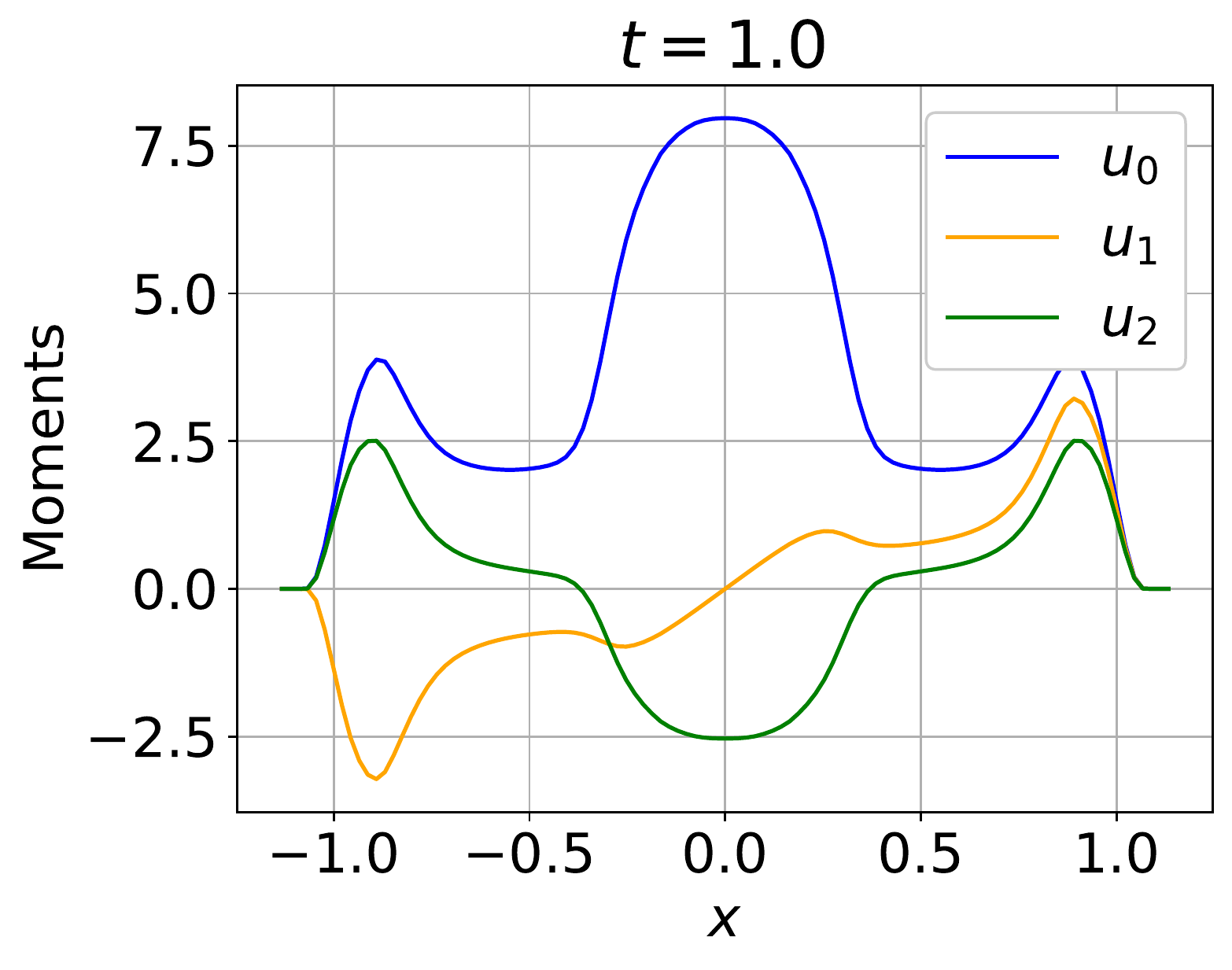}}~~~~~~~~
	\subfigure[P$_2$]{\includegraphics[width=.45\linewidth]{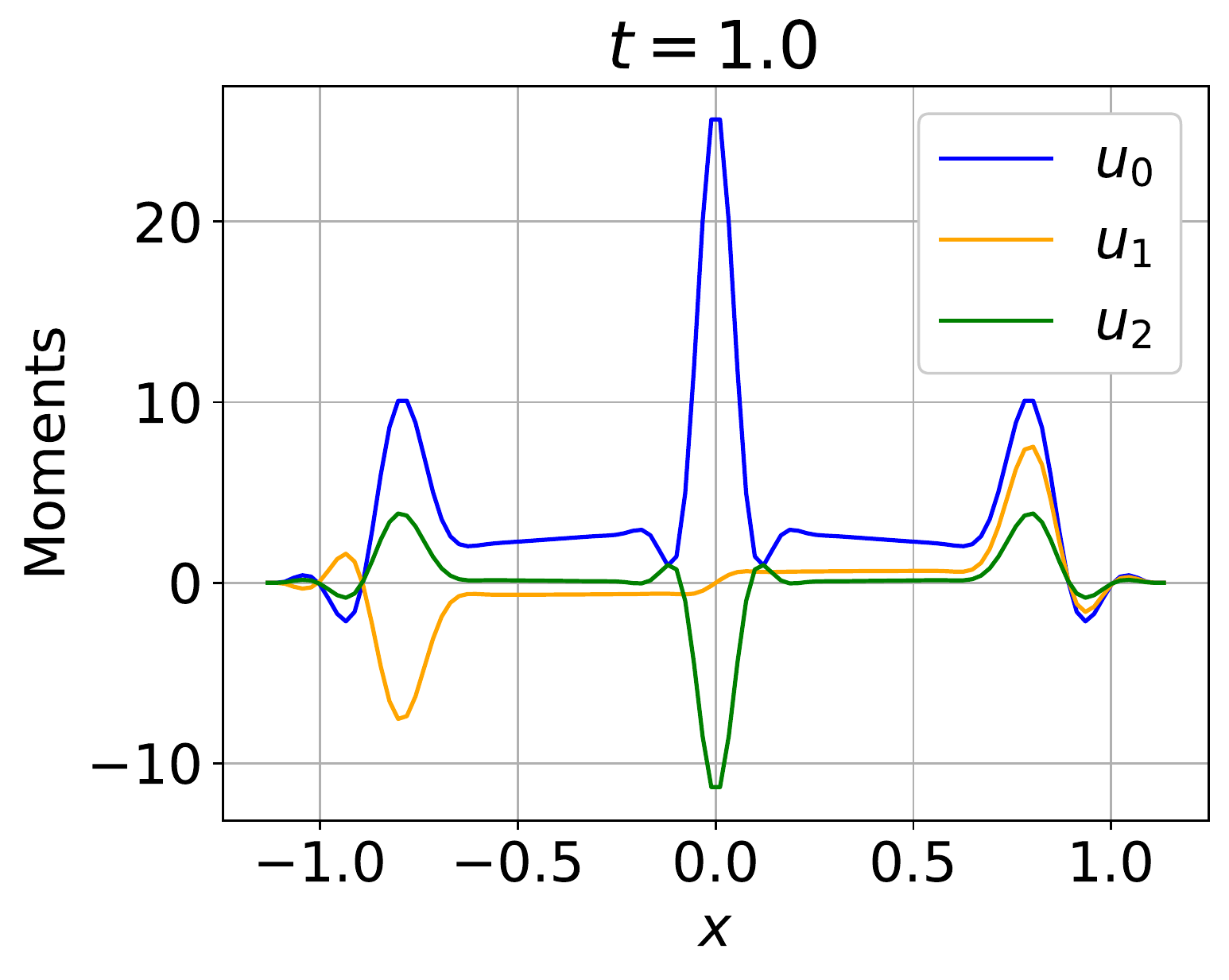}}
	\subfigure[Symmetric network closures]{	\centering \includegraphics[width=0.8\linewidth]{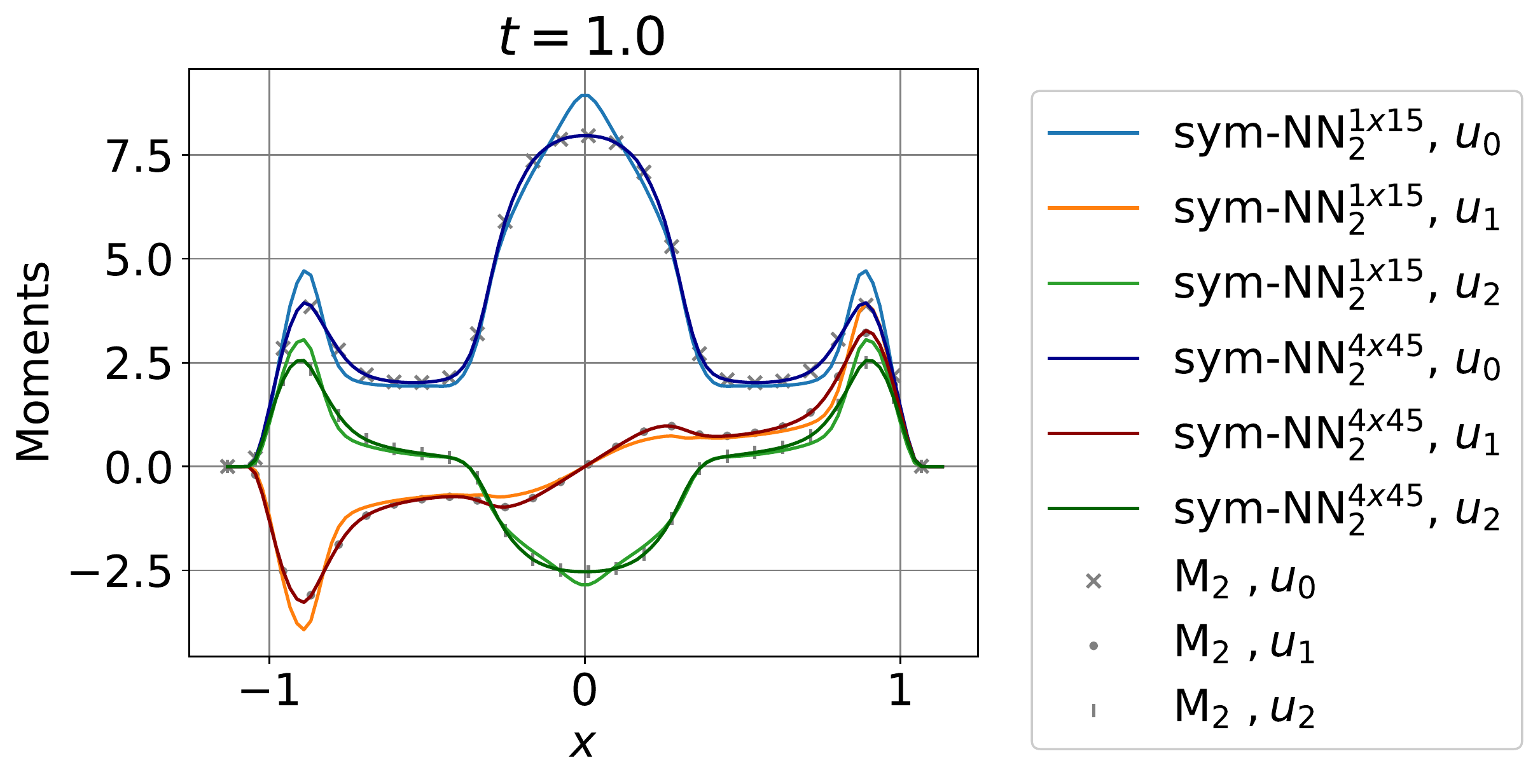} \label{fig:M2_plane_net}}
	\caption{Solutions to the plane source problem at $t_{\text{final}} = 1$ using various moment closures with $N=2$.  Figure~\ref{fig:M2_plane_net} shows that the solution from the larger network ($\texttt{sym-NN}^{4\times45}_2$) is nearly identical to the reference M$_2$ solution. }
	\label{fig:planesourceN2}	
\end{figure}

The computation time in Table~\ref{table:M2_error} shows a 3.2x speedup when replacing the M$_2$ closure by the network closure, which is slightly larger than the 2.6x speedup observed in Table~\ref{table:M1_plane_test_table} for the case $N=1$.
We expect the speedup for using the network closure to increase as the moment order $N$ goes higher, because the optimization problem \eqref{eq:MB_optimization} becomes much harder to solve for large $N$ while the network evaluation time does not appear to have a strong dependency on the value of $N$.

\begin{table}[h]
	\centering
	\begin{tabular}{|c|c|c|c|c|}
		\toprule
		Closure &    M$_2$ $\optQ$ &       P$_2$ &  $\texttt{sym-NN}^{1\times15}_2$ &  $\texttt{sym-NN}^{4\times45}_2$\\
		\midrule
		Time (sec) & 1.90e1 & 2.06e-2 &        5.78e0\,\, &        6.01e0\,\,  \\
		$\Err_{\bu}$      & --- & --- &        9.66e-2 &        5.49e-3  \\
		\bottomrule
	\end{tabular}
\caption{Computation time (sec) and relative errors in the plane source test using various closures with $N=2$. The error $\Err_{\bu}$ (see \eqref{eq:sol_error} for definition) measures the relative $L^2$ error in the final time solutions from data-driven closures with respect to the one from the M$_2$ $\optQ$ closure.} 
	\label{table:M2_error}
\end{table}

\paragraph{Closure comparison on a refined spatial grid for $N=2$.}
The slight differences between the computation time for the small and large networks in Tables~\ref{table:M1_plane_test_table} and \ref{table:M2_error} suggest that the dominant cost in network evaluations is the overhead for calling Keras function \texttt{model.predict()}, as discussed in Section~\ref{subsec:realizable_boundary_test}. To minimize the effect of the overhead, we repeat the comparison test of the closures but on a refined spatial grid with the number of spatial cells increased from 100 to 1000. In this test, each call of \texttt{model.predict()} evaluates 1000, rather than 100, samples of moments, thus it could give a more accurate comparison between the optimization cost and the actual network evaluation cost. To avoid numerical difficulties, we also replace the Dirac-delta function initial condition in \eqref{eq:initial_condition} with a smoother isotropic initial condition
\begin{equation}\label{eq:initial_condition_smooth}
f(t_0=0,x,\mu) = 0.5\chi_{[-\frac{1}{2},\frac{1}{2}]}(x) \cos^2(\pi  x) + f_{\textup{floor}},
\end{equation}
where $\chi_{[a,b]}(x) = 1$ if $x\in[a,b]$ and $\chi_{[a,b]}(x) = 0$ otherwise.
Table~\ref{table:M2_error_smooth} shows the relative error and computation time of the compared closures using the refined spatial grid with 1000 cells and the smooth initial condition \eqref{eq:initial_condition_smooth}. In Table~\ref{table:M2_error_smooth}, a 9.7x speedup for using the large network $\texttt{sym-NN}^{4\times45}_2$ over the M$_2$ closure is reported, which is higher than the 3.2x speedup observed in Table~\ref{table:M2_error}. This result suggests that the advantage in computation time of the network closures can be further improved via an efficient implementation of the network evaluation function with minimal overhead.

\begin{table}[h]
	\centering 
\begin{tabular}{|c|c|c|c|c|}
	\toprule
	 Closure & M$_2$ $\optQ$ &       P$_2$ &  $\texttt{sym-NN}^{1\times15}_2$ &  $\texttt{sym-NN}^{4\times45}_2$ \\
	\midrule
	Time (sec)  & 1.06e3 & 2.96e-1 &        9.03e1\,\, &        1.09e2\,\,\\
	$\Err_{\bu}$     & --- & --- &        6.43e-2 &        1.92e-3 \\
	\bottomrule
\end{tabular}
\caption{Computation time (sec) and relative errors in the test with 1000 spatial cells and smooth initial condition \eqref{eq:initial_condition_smooth} using various closures with $N=2$. The error $\Err_{\bu}$ (see \eqref{eq:sol_error} for definition) measures the relative $L^2$ error in solutions at final time $t_{\textup{final}}=1$ from data-driven closures with respect to the one from the M$_2$ $\optQ$ closure.} 
	\label{table:M2_error_smooth}
\end{table}

\paragraph{Correlation between network moment error $\Err_{\bw}^{\test}$ and solution error $\Err_{\bu}$.}
In Figure~\ref{fig:error_correlation}, we explore the correlation between the test moment error $\Err_{\bw}^{\test}$ of the network approximations and the relative $L^2$ error $\Err_{\bu}$ in the plane source solution from the network closures when $N=2$. The data shown in Figure~\ref{fig:error_correlation} are collected from the small ($\texttt{sym-NN}^{1\times15}_2$) and large $(\texttt{sym-NN}^{4\times45}_2)$ networks considered in Table~\ref{table:M2_error} as well as several other networks of different sizes. 
These results show that the test moment error $\Err_{\bw}^{\test}$ is indeed a good proxy of the error $\Err_{\bu}$ in the final solution, which justifies the use of the moment reconstruction error in the loss function \eqref{eq:loss_function} when training the neural network approximations.
\begin{figure}[h]
	\centering 
    \includegraphics[width = 0.60\linewidth]{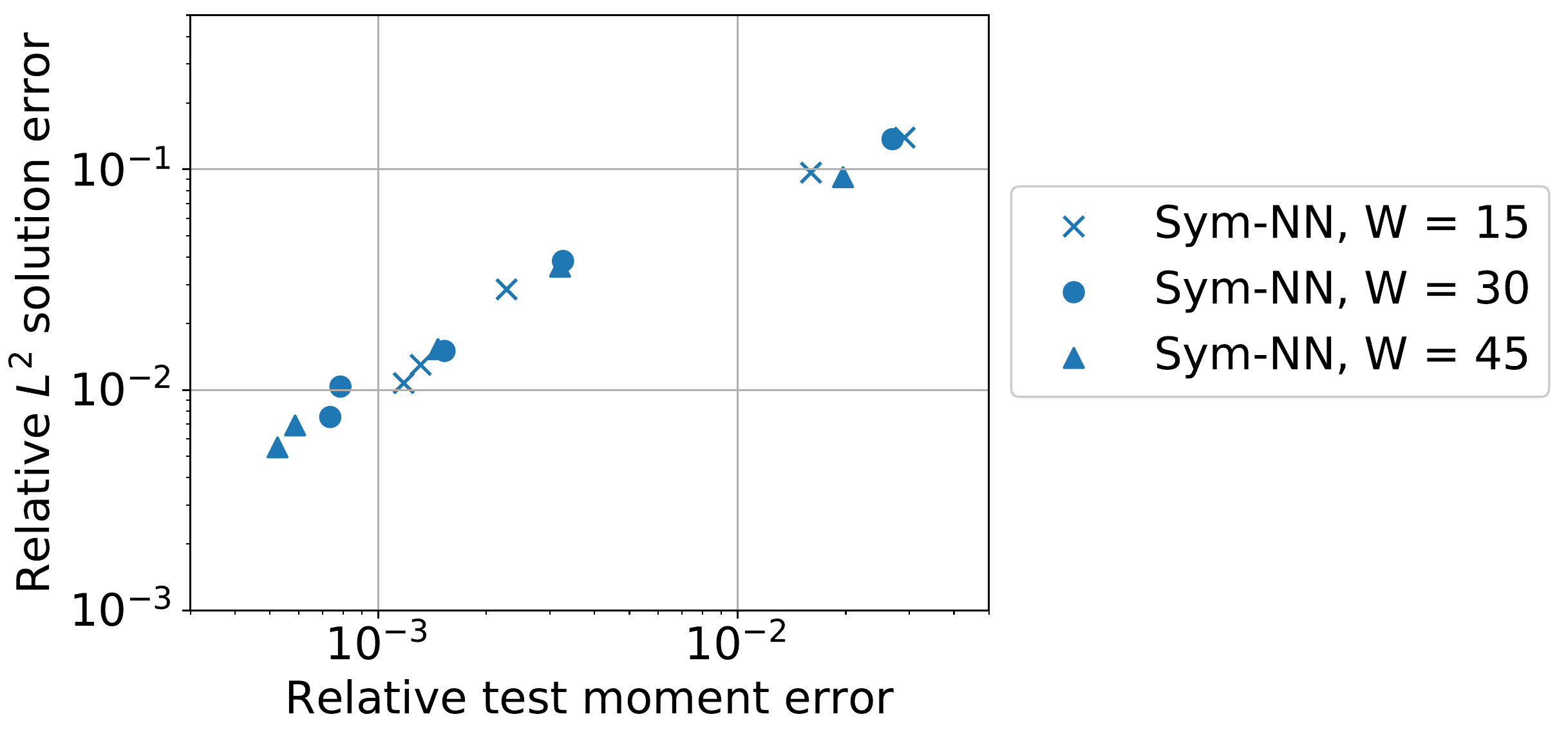} 
	\caption{Test moment errors $\Err_{\bw}^{\test}$ versus $L^2$ error of the plane source solution $\Err_{\bu}$ from the various symmetric neural network closures for $N = 2$. Closures of various depths with width $W = $ 15, 30, and 45 are denoted as blue crosses, circles, and triangles.}
\label{fig:error_correlation}
\end{figure}

\section{Summary and discussion}
\label{sec:conclusion}

We have developed data-driven closures that approximate the map between the moments and the entropy of the moment system for kinetic equations. The approximations are constructed to be convex and twice differentiable to ensure entropy dissipation and hyperbolicity in the moment system. We have shown that convexity of the entropy approximation is preserved in the extension from a normalized realizable set to the full realizable set of moments, which allows for the strategy of building a convex entropy approximation on the bounded normalized realizable set and then extending the approximation to the unbounded full realizable set.
We have focused on the Maxwell-Boltzmann entropy and considered two approaches --- rational cubic splines and neural networks --- to approximate the Maxwell-Boltzmann entropy.
The convexity-preserving cubic spline approach gives fast, accurate, and data-efficient approximations that are convex by construction, but this approach is limited to the case that the normalized realizable set is an interval, which generally implies the restriction of moment order $N=1$ in the simplified slab geometry.
On the other hand, the neural network approach does not have the restrictions on the moment order or geometry, but it requires a substantial amount of data in the training process to build an accurate approximation with convexity enforced on the data points.
We have tested the data-driven closures on the plane source benchmark problem in slab geometry and compared them to the standard, optimization-based M$_N$ closures.
Numerical results indicate that the data-driven closures are able to provide solutions that capture the qualitative behavior of the M$_N$ solutions while requiring much less computation time.

We plan to extend this work in various directions: (i) adopting neural network architectures that guarantee convexity of the network approximation, e.g., \cite{amos2017input} and investigating tools or implementations that allow for faster network evaluation;
(ii) combining the data-driven closures with the entropy regularization scheme developed in \cite{alldredge2019regularized} to address the issue that multipliers goes unbounded as moments approaching realizable boundary; (iii) investigating the extension of proposed data-driven closures to more complex problems that are in higher dimensions with higher moment order and potentially from other applications, such as charge transport or plasma simulations; (iv) exploring other data-driven entropy approximations, such as high-dimensional splines, that can potentially be more data-efficient with faster evaluation; and (v) developing strategies to circumvent the degeneracy issue that the entropy minimization problem has no solution for some realizable moments (see \cite{alldredge2019regularized} and references therein for discussion), which is not addressed in either the regularization scheme in \cite{alldredge2019regularized} or the data-driven closures considered in this paper.


\bibliographystyle{plain}
\bibliography{reference}

\end{document}